\newtheorem{theorem}{Theorem}[section]
\newtheorem{lemma}[theorem]{Lemma}
\newtheorem{setup}[theorem]{Setup}
\newtheorem{proposition}[theorem]{Proposition}
\newtheorem{corollary}[theorem]{Corollary}
\theoremstyle{definition}
\newtheorem{definition}[theorem]{Definition}
\newtheorem{remark}[theorem]{Remark}
\newtheorem{example}[theorem]{Example}
\DeclareMathOperator{\Hom}{Hom}
\DeclareMathOperator{\rad}{rad}
\DeclareMathOperator{\End}{End}
\begin{document}
\emergencystretch 3em

\title{Triangular decompositions: Reedy algebras and quasi-hereditary algebras}

 \address{Teresa Conde\\
   Faculty of Mathematics and TRR 358,
   University of Bielefeld \\
   Universit\"atsstra{\ss}e 25, 33615 Bielefeld, Germany } 
   \email{tconde@math.uni-bielefeld.de}

 \address{Georgios Dalezios\\
 Dipartimento di informatica, 
 Universit\'a degli Studi di Verona \\ Strada Le Grazie 15, 37134 Verona, Italia} 
 \email{georgios.dalezios@univr.it}

 \address{Steffen Koenig\\
 Institute of Algebra and Number Theory,
 University of Stuttgart \\ Pfaffenwaldring 57 \\ 70569 Stuttgart,
 Germany} \email{skoenig@mathematik.uni-stuttgart.de}

 \thanks{TC acknowledges support by the Deutsche Forschungsgemeinschaft (DFG, German Research Foundation) -- Project-ID 491392403 – TRR 358. GD acknowledges support by the project LAVIE -- Large views of small phenomena: decompositions, localizations, and representation type, FIS 00001706, funded by Program FIS2021 of Italian Ministry of University and Research.}

\author{Teresa Conde, Georgios Dalezios and Steffen Koenig}

\begin{abstract}
  Finite-dimensional Reedy algebras
  form a ring-theoretic analogue of Reedy
  categories and were recently proved to be quasi-hereditary. 
  We identify Reedy algebras with quasi-hereditary algebras admitting a
  triangular (or Poincar\'e--Birkhoff--Witt type) decomposition into the tensor product of two
  oppositely directed subalgebras over a common
  semisimple subalgebra. This exhibits homological and
  representation-theoretic structure of the
  ingredients of the Reedy decomposition and it allows to give a 
  characterisation of Reedy algebras in terms of idempotent ideals
  occurring in heredity chains, providing an analogue for Reedy algebras of a
  result of Dlab and Ringel on quasi-hereditary algebras.
\end{abstract}

\subjclass[2010]{16W70, 16G10 (Primary); 18N40 (Secondary)}
\keywords{Quasi-hereditary algebra, Reedy algebra, triangular decomposition}

\maketitle

\section{Introduction}
Finite-dimensional
Reedy algebras were recently introduced in \cite{DS} and
shown to be quasi-hereditary, in the sense of \cites{CPS,DR}. They form a ring-theoretic analogue of Reedy categories which classically originated
in homotopy theory as a generalisation of the cosimplicial
indexing category $\mathbf{\Delta}$ (see \cite[Definition~15.1.8]{Hir}) of finite
ordinals and weakly monotone functions between them. A fundamental property
of $\mathbf{\Delta}$ is
that every morphism therein factorises uniquely as the composite of a morphism
that weakly lowers the degree followed by a morphism that weakly raises the degree. This gives rise to the concept of Reedy categories. Reedy categories are crucial since categories of diagrams in a model category indexed by a Reedy category inherit a model category structure. A classical example is the Reedy model structure on cosimplicial or simplicial spaces, that is, on the category of covariant or contravariant functors from $\mathbf{\Delta}$ to the category of topological spaces (or to the category of simplicial sets) endowed with the Quillen model structure (see, for instance \cite{Bousfield-Kan}).

In order to pass to a ring-theoretic structure, the modification of the concept
of Reedy categories needed is twofold. Namely, one needs to consider versions
of Reedy categories which are linear (enriched over a ring or a field) and
also consist of finitely many objects. As a prototypical example, fix a field
$k$ and consider a truncation $\mathbf{\Delta}_{\leqslant n}$ of the
cosimplicial category at some non-negative integer $n$.
Then the path algebra $k\mathbf{\Delta}_{\leqslant n}$ modulo the two-sided ideal
generated by the cosimplicial relations that hold true up to level $n$, is a
finite-dimensional
Reedy algebra in the sense of \cite{DS} and thus a
quasi-hereditary algebra. Many more examples of Reedy algebras are provided in
this note. 

By definition, a Reedy algebra $A$ contains directed subalgebras
$A^+$ and $A^-$ satisfying an isomorphism called Reedy decomposition
(see Definition~\ref{def:Reedy}). In Theorem \ref{thm:characterization} we
prove that Reedy triples $(A,A^+,A^-)$ coincide
with triples $(A,C,B)$ of quasi-hereditary algebras, where $C$ and $B$
are oppositely directed subalgebras of $A$ satisfying an isomorphism of $C$--$B$--bimodules
$C\otimes_{S}B \cong A$ for $S=C\cap B$. The algebras $C$ and $B$ carry particular structures: the standard modules over $A$ restrict to
projective $C$--modules, while $B$ is coming
with an exact induction functor to $A$--modules that sends simple
$B$--modules to standard $A$--modules.
Such algebras have actually been introduced and studied in the
past in \cites{Koe1,Koe2,Koe95,Koe96}, where $B$ is called an exact Borel
subalgebra, $C$ is called a Delta subalgebra and the
Reedy decomposition is viewed as a triangular
decomposition that has been called
a Cartan decomposition. Such a triangular
decomposition is desirable to
have for algebras arising in algebraic Lie theory, but it is hard to
establish. The connection with Reedy algebras now provides plenty of examples
of such quasi-hereditary algebras, while it conversely provides ring-theoretic and homological structure to Reedy decompositions and the
subalgebras occurring therein. Motivated by this connection, it is proved in \cite[Theorem 3.7]{Rasmussen} that every quasi-hereditary monomial algebra has a Reedy decomposition.

Theorem \ref{thm:characterization} invites us to transfer results from the
well-developed theory of quasi-hereditary algebras to the new class of
Reedy algebras. That this is feasible is demonstrated by 
the second main result, Theorem~\ref{thm:main}. This characterises the existence of a Reedy
decomposition of an algebra $A$
recursively via Reedy
decompositions of $eAe$ and $A/AeA$, where $e$ is any idempotent
of $A$ generating an ideal in the defining heredity chain of $A$ (viewed as
quasi-hereditary algebra).
This is an analogue of a result of Dlab and Ringel \cite[Theorem 1]{DLcomp} who
gave a characterisation of when an algebra $A$ is quasi-hereditary with $AeA$
somewhere in its heredity chain, in terms of the quasi-heredity of $A/AeA$ and
$eAe$ together with some additional conditions.

\section{Quasi-hereditary algebras and particular subalgebras}
\label{sec:two}
Fix a finite-dimensional algebra $A$ over a field $k$. The Jacobson radical of $A$ is denoted by $\rad(A)$. Modules over $A$ are always assumed to be finitely generated left $A$--modules. 
Let $L\coloneqq\{L(i)\, |\, i\in I\}$ be a set of representatives $L(i)=L^A(i)$ of the isomorphism classes of simple $A$--modules and, for all $i\in I$, denote by $P(i)$ or $P^A(i)$ the projective cover of $L(i)$. Sometimes the elements of the index set $I$ and also the simples $L(i)$ for $i\in I$ will be called the \textit{weights} of $A$. For an $A$--module $M$ we set $\mathrm{top}(M)\coloneqq M/\rad(M)$.

Following \cites{CPS,DR}, a (two-sided) ideal $J$ of $A$, is called a \textit{heredity ideal} if the following
conditions are satisfied:
\begin{itemize}
\item[(i)] $J$ is an idempotent ideal,  i.e. $J^2=J$,
\item[(ii)] $J\rad(A)J=0$,
\item[(iii)] $J$ is projective as a left $A$--module.
\end{itemize}
The algebra $A$ is called \textit{quasi-hereditary} if there exists a \textit{heredity chain} of $A$, that is, a chain of (two-sided) ideals 
\begin{equation*}
0=J_{-1}\subseteq J_0\subseteq \cdots \subseteq J_n=A
\end{equation*}
such that $J_j/J_{j-1}$ is a heredity ideal in $A/J_{j-1}$ for all $j=0,1,\dots,n$. One may always assume that the heredity chain is of the form 
\begin{equation*}
0\subseteq A\varepsilon_0A \subseteq A(\varepsilon_0+\varepsilon_1)A \subseteq \cdots \subseteq A(\varepsilon_0+\varepsilon_1+\cdots+\varepsilon_n)A=A
\end{equation*}
where $\varepsilon_0,\varepsilon_1,\dots,\varepsilon_n$ form a complete set of pairwise orthogonal
idempotent elements of $A$, see for instance \cite[\S1]{japan}. It can be shown that there exists a unique $l(i)\in\{0,1,\dots,n\}$ for each simple module $L(i)$, such that $L(i)$ is a simple composition factor of $\mathrm{top}(J_{l(i)}/J_{l(i)-1})$. This is used to define a partial order $\unlhd$ on the set $L$ of simple modules by setting $L(i)\lhd L(j)$ if and only if $l(i)>l(j)$. Instead of $L(i)\lhd L(j)$, we sometimes write $i\lhd j$ or even $e_i\lhd e_j$ for $\{e_i\colon i\in I\}$ a complete set of primitive orthogonal idempotents satisfying $P(i)\cong A e_i$.
An algebra $A$ may have many quasi-hereditary structures. Fixing
  one means fixing the partial order $\unlhd$. Therefore, it is more precise
  to define a quasi-hereditary algebra as a pair
  $(A,\unlhd)$.

For all $i\in I$, the \textit{standard module} $\Delta(i)$ is defined to be $\Delta(i)\coloneqq P(i)/\sum_{j\not\unlhd i}\mathrm{Tr}_{P(j)}P(i)$, where $\mathrm{Tr}_{P(j)}P(i)$ is the trace of $P(j)$ in $P(i)$, that is, the sum of the images of all homomorphisms from $P(j)$ to $P(i)$. Thus, $\Delta(i)$ is the largest quotient of $P(i)$ having composition factors $L(j)$ with $j\unlhd i$. 
Sometimes we write $\Delta^A(i)$ to emphasise that the standard modules are defined over $A$. The standard modules over a quasi-hereditary algebra have the following fundamental properties which originate from \cite[Lemma~3.4]{CPS}, see also \cite{DRapp}. For all $i\in I$ there exist the following short exact sequences: 
\begin{itemize}
\item[(i)] $0\rightarrow K(i)\rightarrow \Delta(i) \rightarrow L(i)\rightarrow 0$ where $K(i)$ admits a finite filtration whose subquotients are isomorphic to simple modules of the form $L(j)$ for $j\lhd i$,
\item[(ii)] $0\rightarrow Q(i) \rightarrow P(i)\rightarrow \Delta(i)\rightarrow 0$ where $Q(i)$ admits a finite filtration whose subquotients are isomorphic to standard modules of the form $\Delta(j)$ for $j\rhd i$.
\end{itemize}

Quasi-hereditary algebras arise frequently in various areas of mathematics,
including algebraic Lie theory, where Verma modules of semisimple complex Lie
algebras and Weyl modules of reductive algebraic groups are standard modules
of certain quasi-hereditary algebras. In an attempt to establish a parallel
with the classical Poincar\'e--Birkhoff--Witt theorem for complex semisimple Lie algebras, an analogue of Borel subalgebras has been defined in the context of quasi-hereditary algebras (\cites{Koe1,Koe2}). 
Let $(A,\unlhd)$ be a quasi-hereditary 
structure on the finite-dimensional algebra $A$ with index set $I$ of
simple modules. A unital subalgebra $B \subseteq A$ with the same index set of
simples is called an \textit{exact Borel subalgebra} of $(A,\unlhd)$ if
induction $A \otimes_B -$ from $A$--modules to $B$--modules is an exact
functor sending simple $B$--modules $L^B(i)$ to standard $A$--modules
$\Delta^A(i)$ for each index $i$, and $B$ itself is a quasi-hereditary
algebra, for the given ordering ${\unlhd}$, with simple standard modules
$\Delta^B(i) = L^B(i)$.

The opposite algebra of a quasi-hereditary algebra is quasi-hereditary too  with the same heredity chain (hence for the same ordering on the simples). A
subalgebra $C$ of $A$ is called a \textit{Delta subalgebra} if its
opposite algebra is an exact Borel subalgebra of $A^{op}$. The algebra $C$
is then quasi-hereditary, for the given ordering ${\unlhd}$, with projective
standard modules. The latter coincide with the standard modules over $A$,
that is, the restriction of $\Delta^A(i)$ to $C$--modules is the
indecomposable projective $C$--module $P^C(i)$.

Not every quasi-hereditary algebra has an exact Borel subalgebra or a
Delta subalgebra; an example that first appeared in \cite{Koe1} is
given below. In \cite{KKO} it has, however, been shown that for
every quasi-hereditary algebra $A$ there is a Morita equivalent one, with
a quasi-hereditary structure provided by transport of structure from $A$,
that does have an exact Borel subalgebra, carrying additional
structure (see also \cites{Conde,KueMie,RODRIGUEZRASMUSSEN} for further information).

\section{Reedy algebras}

The following class of algebras was introduced in \cite{DS}, replacing
the unique down-up factorisation property of Reedy categories by a linearised
version suitable for algebras over a field.

\begin{definition}
\label{def:Reedy}
Let $A$ be a finite-dimensional $k$--algebra with a complete set $E\coloneqq\{e_0,e_1,\dots,e_n\}$ of pairwise orthogonal idempotents. We call $A$ \textit{Reedy} (or a
\textit{Reedy algebra}) if it admits a degree function ${\deg\colon E}\rightarrow \mathbb{N}$ and two subalgebras $A^+$ and $A^-$ containing the idempotents $e_0, \dots, e_n$ 
such that the following conditions are satisfied:
\begin{itemize}
\item[(i)] for all $i$, there is an isomorphism of $k$--vector spaces $e_iA^+e_i\cong k$ and for $i\neq j$ the implication $e_jA^+e_i\neq 0\Rightarrow \deg(e_j)>\deg(e_i)$ holds true,
\item[(ii)] for all $i$, there is an isomorphism of $k$--vector spaces $e_iA^-e_i\cong k$ and for $i\neq j$ the implication $e_jA^-e_i\neq 0\Rightarrow \deg(e_j)<\deg(e_i)$ holds true,
\item[(iii)] for each pair $i,j$, the multiplication in $A$ induces an
  isomorphism of $k$--vector spaces,
\begin{equation}
\label{eq:Reedy_decomp}
\bigoplus_{l=0}^{n} \,e_jA^+e_l\, \otimes_{k}\, e_lA^-e_i \rightarrow e_jAe_i.
\end{equation}

\end{itemize}
Sometimes the Reedy algebra $A$ will be denoted by $(A,A^+,A^-)$ and the
collection of isomorphisms in \eqref{eq:Reedy_decomp} will be called a 
\textit{Reedy decomposition}.
\end{definition}

Note that, in the definition above, the idempotents $e_i \in A$ are not necessarily primitive. Moreover, it follows from the definition that $A^+$ and $A^-$ have the same unit as $A$. To illustrate the definition of Reedy algebras, some positive or negative examples are now given.

\begin{example}
\label{ex:matrix}
This example shows that the property of being a Reedy algebra is not invariant under Morita equivalence. 

Let $k$ be a field and consider the $k$-algebra $A=k$.
  Choosing $e=1$ and
  $A^+ = k = A^-$ yields a Reedy decomposition of $A$.
  
  The $k$-algebra $B$ of two by two matrices over $k$ is Morita equivalent to
  $A$. It does, however, not admit a Reedy decomposition. Indeed, when choosing 
  the set $E$ to contain solely the
  idempotent $e= 1$, the algebras $B^+$ and $B^-$ must both
  coincide with $k$, by (i) and (ii), respectively, and condition (iii)
  is not satisfied, as $B$ cannot be isomorphic to $k \otimes_k k$.
  
  Otherwise,
  the unit $1 \in B$ must be decomposed as a sum $1=e+f$ of two pairwise
  orthogonal idempotents, which are necessarily primitive in $B$. Both $e$ and $f$ must be contained in $B^+$ and in $B^-$. That forces the semisimple algebra
  $S \coloneqq k e \oplus k  f$ to be a two-dimensional subalgebra of both $B^+$ and $B^-$. Hence, the dimensions of $B^+$ and of $B^-$ must be at least two each.

  If $B^+$ is two-dimensional, then it equals $S$ and
  the assumed Reedy decomposition of $B$ can be written as $S \otimes_S B^-$, which is isomorphic 
  to $B^-$. This implies $B^- = B$, but $B$ itself cannot satisfy condition (ii), whatever the degree on $e$ and $f$ is. 
  If $B^-$ is two-dimensional, a similar contradiction arises.

  As we have already seen, a contradiction to condition (ii) or (i) arises when $B^-$ or $B^+$ have dimension four. So, the only case being left is that of $B^+$ and $B^-$ both having dimension three. Because of conditions (ii) and (iii) using opposite orderings, the algebras $B^+$ and $B^-$ cannot be equal. In particular, their intersection must be precisely $S$.
  Now condition (iii) of the assumed Reedy decomposition leads to a contradiction in the following way. The three vector spaces $eBe$, $eB^+e$ and $eB^-e$ are one-dimensional each and so are the vector spaces $fBf$, $fB^+f$ and $fB^-f$. By definition of $B$ and by $e$ and $f$ being primitive, also $eBf$ and $fBe$ are one-dimensional. As $B^+$ is three-dimensional, either $eB^+f$ or $fB^+e$ is one-dimensional and the other one vanishes, and similarly for $B^-$. Since $B^+ \cap B^- = S$ and $fBe$ is one-dimensional, either $fB^+e$ or $fB^-e$ must vanish. Up to possibly exchanging $e$ and $f$, this implies that $fB^+e$ and $eB^-f$ both are one-dimensional, while $eB^+f$ and $fB^-e$ vanish. However, condition (iii) requires an isomorphism of $k$-vector spaces $(fB^+e \otimes_k eB^-f) \oplus  (fB^+f \otimes_k fB^-f) \cong fBf$ between a two-dimensional vector space and a one-dimensional one, which is a contradiction.
  
\end{example}

\begin{example}
\label{ex:Steffens_example}
This example illustrates how the existence and shape of a Reedy
   decomposition depends on the chosen ordering of the given
   idempotents. \medskip

\hspace{-10pt}\begin{minipage}{0.55 \textwidth}
\setlength{\parindent}{10pt}
  Let the algebra $A\coloneqq kQ/I$ be given by the quiver $Q$ on the right
    and a commutativity relation between the two paths of
  length two. By $S$ we denote the commutative semisimple $k$-algebra of
  dimension four that is generated by the four vertices. \\
  \end{minipage}
  \begin{minipage}{0.45 \textwidth}
  \hspace{2cm}
  \vspace{0.4cm}
  \xymatrix@C=1pc@R=1pc{
    & a \ar[ld]\ar[rd] & \\
    b \ar[rd] && c\ar[ld]  \\
    & d & } 
    \end{minipage} 
  
  For the degree function $\deg(e_a)=1$, $\deg(e_b)=2$, $\deg(e_c)=3$ and $\deg(e_d)=4$, the algebra $A$ is Reedy with $A^+ = A$ and $A^- = S$. Choosing instead $\deg(e_a)=1$, $\deg(e_b)=2$, $\deg(e_c)=2$ and $\deg(e_d)=3$, the algebra $A$ is Reedy again with $A^+ = A$ and $A^- = S$. This shows that degree functions do not have to be injective.
 
   For the degree function $\deg(e_a)=4$, $\deg(e_b)=2$, $\deg(e_c)=3$ and $\deg(e_d)=1$, the algebra $A$ is Reedy with $A^+ = S$ and $A^- = A$. 
  
 For the degree function $\deg(e_a)=4$, $\deg(e_b)=3$, $\deg(e_c)=1$ and $\deg(e_d)=2$, the algebra $A$ is not Reedy for any choice of $A^+$ and $A^-$. Indeed, suppose that such subalgebras existed. Using degree considerations and taking into account the shape of the quiver $Q$, condition (iii) of Definition \ref{def:Reedy} would yield $e_bA^-e_a=e_bAe_a$. In a similar manner, one would obtain $e_dA^-e_b=e_dAe_b$ and $e_cAe_a=e_cA^-e_a$. Consequently, $ k\cong e_dAe_bAe_a=e_dA^-e_bA^-e_a$ would be a subspace of $e_dA^-e_a$. But then, by condition (iii), the vector space $e_dAe_a$ would be at least two-dimensional, which is a contradiction. An alternative argument is as follows. Consider the ordering on the simple $A$-modules that reflects the chosen degree function. In \cite[Example 2.3]{Koe1}, it is shown that this ordering gives $A$ a
   quasi-hereditary structure for which there is no exact
   Borel subalgebra (the notation used in \cite{Koe1} corresponds to the one used here, as explained later on in  Corollary~\ref{cor:heredity chain of Reedy}).  This, together with \cite[Theorem 4.22]{DS}, also shows that the algebra $A$ is not Reedy for this degree function. 
\end{example}

\begin{example}
This example sketches a construction that can be
   used to produce Reedy algebras $A$ from given algebras $A^+$ and $A^-$.
   
Let $A^+$ and $A^-$ be basic algebras contained in some other algebra $\Lambda$. Assume that $\Lambda$ contains a set $E = \{e_0, \dots , e_n\}$ of primitive idempotents, which are also contained in
$A^+$ and $A^-$. Suppose there is a degree function on $\Lambda$ such that $A^+$ and $A^-$ satisfy
conditions (i) and (ii) in the definition of Reedy algebra, with respect to $E$. Set $S \coloneqq k e_0 \times \dots\times k e_n$ and assume that $S = A^+ \cap A^-$ (where
the intersection is taken in $\Lambda$) is a maximal
semisimple subalgebra of $A^+$ as well as of $A^-$. 
This yields decompositions $A^+ = S \oplus \rad(A^+)$ and $A^- = S \oplus \rad(A^-)$.

Forgetting now about $\Lambda$, one may set
$A\coloneqq\bigoplus_{l=0}^n A^+ e_l \otimes_k e_l A^-$
as a vector space, and turn $A$ into an associative $k$-algebra in the
following way. Embed $A^+$ into $A$ by sending an element $b \in A^+$
to $\sum_{l=0}^nb e_l \otimes_k e_l$, and similarly for $A^-$: $ c \mapsto \sum_{l=0}^n e_l \otimes_k e_lc$. 
Now define a product $\times$ on $A$ by setting the product 
$b \times c \coloneqq 0$ when $b \in \rad(A^-)$ and $c \in \rad(A^+)$ and using the product in $S$ for $c, b \in S$. 
In terms of elements, the product looks as follows. Consider $\alpha,c+ u\in A^+$ and $b+t,\delta \in A^-$, where
$t,u \in S$, $c \in \rad{A^+}$ and $b\in \rad{A^-}$. Then the product
$(\alpha e_l \otimes e_l (b+t)) \times ((c+u) e_m \otimes e_m \delta)$ has the following summands. The summand
$(\alpha e_l \otimes e_l b) \times (c e_m \otimes e_m \delta) $ is zero, since $b$ and 
$c$ are elements of the respective radicals. 
The summand $(\alpha e_l \otimes e_l b) \times (u e_m \otimes e_m \delta)$ equals
$\alpha e_l \otimes e_l b u e_m \delta$, where
$e_l b u e_m \delta$ is a product in $A^-$. Finally, the summand $(\alpha e_l \otimes e_l t) \times ((c+u) e_m \otimes e_m \delta)$ equals
$\alpha e_l t (c+u) e_m \otimes e_m \delta$, where $\alpha e_l t (c+u)e_m$ is a
product in $A^+$.
The embeddings defined above allow to view $A^+$ and $A^-$ as subalgebras of $A$. 
   Then $A$
   is a Reedy algebra. (When $A^-$ is the opposite algebra of $A^+$, the
   algebra $A$ is a dual extension algebra in the sense of Xi, see
   \cites{Xidual,DengXidual}.)
\end{example}
   
\begin{example}
The construction sketched in the previous example can
sometimes be generalised, using assumptions that have 
been worked out in special cases.

   For algebras $A^+$ and $A^-$ as in the previous example,
   satisfying conditions (i) and (ii) in
   the definition of Reedy algebra, one may again set
   $A\coloneqq\bigoplus_{l=0}^n A^+ e_l \otimes_k e_l A^-$,
   as a vector space,  containing $A^+$ and $A^-$ (as subspaces).
   But now one may try to define the product on $A$ more generally, by setting
   $b \times c$ for $b \in \rad(A^-)$ and $c \in \rad(A^+)$ to be a possibly non-trivial linear combination of elements in $A$, and again keeping the multiplications in $A^+$, $A^-$ and $S$. To 
   get in this way an associative algebra $A$ needs strong assumptions on the linear combinations being used in the definition of this multiplication. If $A$ becomes an
    associative algebra in this way, it contains $A^+$ and $A^-$ as subalgebras and automatically
    has the desired Reedy decomposition.
    Such a construction has been carried out, under strong assumptions,
    for instance in \cite{KXtwisted}, and shown to work for Temperley--Lieb algebras as well as for q-Schur algebras of finite representation type. Other examples of algebras with such a Reedy decomposition are the twisted double incidence algebras introduced and studied by Deng and Xi (\cite{DengXi}).
\end{example}

A finite-dimensional $k$--algebra $\Lambda$ is called \textit{elementary} if the $k$--algebra $\Lambda/\rad(\Lambda)$ is isomorphic to a product of copies of the field $k$ (or, equivalently if every simple $\Lambda$--module is one-dimensional over $k$). This implies that the algebra $\Lambda$ is basic, and the converse holds under the additional assumption that $k$ is an algebraically closed field.

\begin{lemma}
\label{lem:basic_algebras}
Let $(A,A^+,A^-)$ be a Reedy $k$--algebra with a complete set $E\coloneqq\{e_0,e_1,\dots,e_n\}$ of pairwise orthogonal idempotents, as in Definition~\ref{def:Reedy}. Then the algebras $A^+$ and $A^-$ are elementary, both having $E$ as a complete set of primitive pairwise orthogonal idempotents.

As a consequence, the $k$-algebra $S\coloneqq ke_0 \times ke_1 \times \dots \times
ke_n$ is a maximal semisimple subalgebra of $A^+$ and of $A^-$ (but in general not of $A$).
\end{lemma}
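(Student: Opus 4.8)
The plan is to work directly with the idempotents in $E$; since the treatment of $A^-$ is symmetric to that of $A^+$, with condition~(ii) replacing condition~(i), I only describe the latter. First I would note that, as $e_i \in A^+$ and $e_iA^+e_i$ is one-dimensional over $k$ by condition~(i), necessarily $e_iA^+e_i = ke_i$. Thus $e_iA^+e_i \cong k$ is a local ring, so $e_i$ is primitive in $A^+$; and because the $e_i$ are pairwise orthogonal and add up to the common unit of $A$ and $A^+$, the set $E$ is a complete set of primitive pairwise orthogonal idempotents of $A^+$, and symmetrically of $A^-$.

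Next I would identify the radical. Put $N \coloneqq \bigoplus_{i \neq j} e_jA^+e_i$, so that $A^+ = S \oplus N$ as $k$-vector spaces, where $S = \bigoplus_i e_iA^+e_i$ is a semisimple subalgebra. I claim $N$ is a nilpotent two-sided ideal of $A^+$. It is stable under left and right multiplication by each $e_l$, hence by $S$. The product $e_jA^+e_i \cdot e_lA^+e_m$ vanishes unless $i = l$, in which case it lands in $e_jA^+e_m \subseteq N$ unless $j = m$; but $i = l$ and $j = m$ with $i \neq j$ is impossible, since then $e_jA^+e_i \neq 0$ and $e_iA^+e_j \neq 0$ would force both $\deg(e_j) > \deg(e_i)$ and $\deg(e_i) > \deg(e_j)$ by condition~(i). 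Hence $N \cdot N \subseteq N$, so $N$ is an ideal. For nilpotency, any nonzero product of elements taken from summands $e_{j_1}A^+e_{i_1}, \dots, e_{j_r}A^+e_{i_r}$ forces $i_1 = j_2, \dots, i_{r-1} = j_r$ with every factor nonzero, whence condition~(i) gives a strictly decreasing chain $\deg(e_{j_1}) > \deg(e_{i_1}) = \deg(e_{j_2}) > \cdots > \deg(e_{i_r})$, which has length at most $n+1$; thus $N^{n+2} = 0$. So $N \subseteq \rad(A^+)$, while $A^+/N \cong S \cong k^{n+1}$ is semisimple, giving $\rad(A^+) \subseteq N$. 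Therefore $\rad(A^+) = N$ and $A^+/\rad(A^+) \cong k^{n+1}$: the algebra $A^+$ is elementary, and the same argument applies to $A^-$.

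For the consequence, the identity $A^+ = S \oplus \rad(A^+)$ just obtained exhibits $S$ as a vector-space complement of the radical, which makes it a maximal semisimple subalgebra: for any semisimple subalgebra $T$ of $A^+$ with $S \subseteq T$, the intersection $T \cap \rad(A^+)$ is a nilpotent two-sided ideal of $T$, hence $0$, so $T$ embeds into $A^+/\rad(A^+)$ and $\dim_k T \leq n+1 = \dim_k S$, forcing $T = S$; the argument for $A^-$ is the same. That $S$ is in general not maximal semisimple in $A$ can be seen from examples of Reedy algebras that are not basic, in which some $e_i$ fails to stay primitive in $A$ and $\dim_k A/\rad(A) > n+1$; such examples arise among the twisted constructions recalled above.

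The only step demanding any care is the verification that the off-diagonal part $N$ is multiplicatively closed and nilpotent, and both parts of it reduce to the impossibility of the two strict inequalities in condition~(i) holding simultaneously — along a cycle of nonzero components in the first case, along a chain in the second. Everything else is routine bookkeeping with orthogonal idempotents.
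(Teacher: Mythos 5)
Your proof is correct, and it takes a somewhat different (more explicit) route than the paper's. The paper argues module-theoretically: $\End_{A^+}(A^+e_i)\cong e_iA^+e_i\cong k$ is local, so each $A^+e_i$ is indecomposable, and an isomorphism $A^+e_i\cong A^+e_j$ for $i\neq j$ would force both $e_iA^+e_j\neq 0$ and $e_jA^+e_i\neq 0$, contradicting the degree condition; hence the $A^+e_i$ are pairwise non-isomorphic indecomposable projectives with endomorphism ring $k$, which already gives that $A^+$ is elementary with $E$ a complete set of primitive orthogonal idempotents. You instead work with the Pierce decomposition, identifying $\rad(A^+)$ with the off-diagonal part $N=\bigoplus_{i\neq j}e_jA^+e_i$ by checking multiplicative closure and nilpotency via strictly decreasing degree chains. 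Both arguments hinge on exactly the same key observation — the degree function forbids nonzero components in both directions between distinct idempotents — but yours has the advantage of explicitly computing the radical, which makes the decomposition $A^+=S\oplus\rad(A^+)$ and hence the maximality of $S$ (via the standard nil-ideal argument) completely transparent; the paper's proof does not address that final assertion at all. Two tiny remarks: your nilpotency bound can be sharpened to $N^{n+1}=0$ since a strictly decreasing chain of degrees of elements of $E$ has at most $n+1$ terms (harmless as stated); and for the parenthetical "not maximal in $A$ in general", a cleaner witness than the twisted constructions is a truncated cosimplicial algebra $k\mathbf{\Delta}_{\leqslant 1}$, where $e_{[1]}Ae_{[1]}$ contains nontrivial idempotents such as $d^0s^0$, so $e_{[1]}$ is not primitive in $A$ and $\dim_k A/\rad(A)>n+1$.
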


\begin{proof}
By Definition \ref{def:Reedy}(i), $\End_{A^+}(A^+e_i) \cong k$ and the existence of an invertible homomorphism in $\Hom_{A^+}(A^+e_i,A^+e_j)\cong e_iA^+e_j$ for $i\neq j$ would imply that $\deg(e_j)>\deg(e_i)$ and $\deg(e_i)>\deg(e_j)$. Hence the modules $A^+e_i$ and $A^+e_j$ are indecomposable and can only be isomorphic if $i=j$. This proves the statement for $A^+$. The corresponding statement for $A^-$ can be proved by similar arguments, using Definition \ref{def:Reedy}(ii).
\end{proof}

\begin{remark}
\label{rem:multiplications}
If $(A,A^+,A^-)$ is a Reedy algebra  with a complete set $E\coloneqq\{e_0,e_1,\dots,e_n\}$ of pairwise orthogonal idempotents, then multiplication in $A$ induces an isomorphism of left $A^+$--modules, 
${\bigoplus_{l=0}^{n} \,A^+e_l\, \otimes_{k}\, e_lA^-e_i} \rightarrow Ae_i$, and an isomorphism 
$\bigoplus_{l=0}^{n} \,e_iA^+e_l\, \otimes_{k}\, e_lA^- \rightarrow e_iA$ of right $A^-$--modules. 
Multiplication also yields an isomorphism ${\bigoplus_{l=0}^{n} \,A^+e_l\, \otimes_{k}\, e_lA^-} \rightarrow A$ of $A^+$--$A^-$--bimodules. In addition, if $e$ is an idempotent of minimal degree in $E$, then for each $e'\in E$, by the defining properties of Reedy algebras, there are isomorphisms $eAe'\cong eA^- e'$ and $e'Ae\cong e'A^+e$ as $k$--vector spaces (which can be taken to be equalities).
\end{remark}

The following observation is well known.

\begin{proposition}
\label{prop:directed}Let $A$ be a finite-dimensional $k$--algebra with an irredundant list $P(0),P(1),\dots,P(n)$ of representatives of isomorphism classes of indecomposable projective modules. Assume that there is a partial order $\unlhd$ on the set of weights of $A$ with $\End_{A}(P(i))\cong k$ for all $i=0,1,\dots,n$ and such that 
$\Hom_{A}(P(j),P(i))\neq 0$ only if $i \unlhd j$. Then $(A,\unlhd)$ is quasi-hereditary with simple standard modules. Moreover,
$(A,\unlhd^{op})$ is also quasi-hereditary with projective standard modules (when using
the opposite partial order of weights).
  
In particular, if $(A,A^+,A^-)$ is Reedy then $A^+$ (resp. $A^-$) is quasi-hereditary with projective (resp. simple) standard modules, by setting $i \lhd j$ if and only if $\deg(e_i)>\deg(e_j)$.
\end{proposition}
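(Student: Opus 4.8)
The plan is to reduce the statement to two elementary facts about the indecomposable projectives $P(i)$ and then invoke the module-theoretic characterisation of quasi-hereditary algebras; the argument is essentially the standard one for directed algebras, so no serious obstacle is expected — the only point really demanding attention is keeping the two orders apart at the very end. First I would unpack the hypothesis $\End_A(P(i))\cong k$: since $P(i)$ is indecomposable, $\End_A(P(i))$ is local with semisimple quotient $\End_A(L(i))$, and as it is the field $k$ its radical vanishes, so $\End_A(L(i))\cong k$ and hence $\dim_k\Hom_A(P(j),M)=[M:L(j)]$ for every finite-dimensional $M$ and every $j$. In particular $[P(i):L(i)]=\dim_k\End_A(P(i))=1$ (the unique copy of $L(i)$ sitting in $\mathrm{top}(P(i))$), and $\Hom_A(P(j),P(i))\neq 0\iff[P(i):L(j)]\neq 0$; combined with the directedness hypothesis, this shows that every composition factor $L(j)$ of $P(i)$ satisfies $i\unlhd j$.

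Next I would read off the standard modules for both orders from this. For $\unlhd$: the only composition factor of $P(i)$ with label $\unlhd i$ is $L(i)$, so $\Delta^A(i)=L(i)$; concretely $\sum_{j\not\unlhd i}\mathrm{Tr}_{P(j)}P(i)=\rad(P(i))$, where $\subseteq$ holds because for $j\neq i$ the image of any map $P(j)\to P(i)$ lies in $\rad(P(i))$ (otherwise $L(j)\cong L(i)$) and $\supseteq$ holds because $\rad(P(i))$ is generated by its top, whose constituents are the $L(j)$ with $j\rhd i$. For the opposite order $\unlhd^{\mathrm{op}}$ the admissible labels are instead the $j\unrhd i$, and since every composition factor of $P(i)$ already satisfies this, the standard module at $i$ is $P(i)$ itself, hence projective.

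Then I would conclude quasi-heredity from the standard characterisation (see e.g.\ \cite{DRapp}): for a fixed order, an algebra is quasi-hereditary exactly when $[\Delta(i):L(i)]=1$ for every $i$ and each $P(i)$ admits a filtration with top quotient $\Delta(i)$ and all remaining subquotients of the form $\Delta(j)$ with $j\rhd i$. For $\unlhd$ this holds with the filtration given by $\mathrm{top}(P(i))=L(i)=\Delta^A(i)$ followed by any composition series of $\rad(P(i))$, whose factors are the simples $L(j)=\Delta^A(j)$ with $j\rhd i$; so $(A,\unlhd)$ is quasi-hereditary with simple standard modules. For $\unlhd^{\mathrm{op}}$ the standard module at $i$ is the projective $P(i)$, so $[\Delta(i):L(i)]=1$ and the required filtration is the trivial one; so $(A,\unlhd^{\mathrm{op}})$ is quasi-hereditary with projective standard modules. (Alternatively one could build a heredity chain for $(A,\unlhd)$ directly, by iterating: split off the trace ideal of the simple projectives attached to the $\unlhd$-maximal weights, and check that the quotient again satisfies the hypotheses — only bookkeeping, no genuine difficulty, the proposition being well known.)

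For the final assertion, Lemma~\ref{lem:basic_algebras} gives that $A^+$ and $A^-$ are elementary with $E$ a complete irredundant set of primitive idempotents, so $\{A^\pm e_i\}_i$ is an irredundant list of representatives of their indecomposable projectives. By Definition~\ref{def:Reedy}(i), $\End_{A^+}(A^+e_i)\cong e_iA^+e_i\cong k$, and since $\Hom_{A^+}(A^+e_j,A^+e_i)\cong e_jA^+e_i$ we get $\Hom_{A^+}(P^{A^+}(j),P^{A^+}(i))\neq 0\Rightarrow\deg(e_j)>\deg(e_i)$, i.e.\ $\Rightarrow j\unlhd i$ for the order defined by $i\lhd j\Leftrightarrow\deg(e_i)>\deg(e_j)$; this is the hypothesis of the proposition for the \emph{opposite} order, so the part already proved yields that $(A^+,\unlhd)$ is quasi-hereditary with projective standard modules. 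Symmetrically, Definition~\ref{def:Reedy}(ii) gives $\Hom_{A^-}(P^{A^-}(j),P^{A^-}(i))\neq 0\Rightarrow i\unlhd j$ directly, so $(A^-,\unlhd)$ is quasi-hereditary with simple standard modules. The single subtlety here is exactly this asymmetry: $A^+$ requires passing to the opposite order and therefore ends up with projective standard modules, whereas $A^-$ matches the hypothesis verbatim.
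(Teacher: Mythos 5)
Your argument is correct, and it reaches the conclusion by a genuinely different route from the paper's. The paper constructs a heredity chain directly, by induction on the number of simples: for $\unlhd$ it peels off a maximal weight $t$, shows $P(t)=L(t)$ is simple projective and that its trace ideal is a heredity ideal, then passes to the quotient; for $\unlhd^{op}$ it does the dual with a minimal weight $b$, checking $J'\rad(A)J'=0$ via $[P(b):L(b)]=1$. You instead compute the standard modules globally — $\Delta(i)=L(i)$ for $\unlhd$ (via the identification $\sum_{j\not\unlhd i}\mathrm{Tr}_{P(j)}P(i)=\rad(P(i))$) and $\Delta(i)=P(i)$ for $\unlhd^{op}$ — and then invoke the module-theoretic characterisation of quasi-heredity from \cite{DRapp} ($[\Delta(i):L(i)]=1$ together with a $\Delta$-filtration of each $P(i)$ of the prescribed shape). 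Your approach is arguably cleaner in that it exhibits the standard modules explicitly in one stroke, at the cost of importing the Dlab--Ringel characterisation as a black box; the paper's induction is more self-contained, working straight from the definition of a heredity ideal (you sketch this alternative yourself in your parenthetical remark). Your treatment of the ``in particular'' clause is also more detailed than the paper's one-line reduction, and you correctly flag the only delicate point there, namely that $A^+$ satisfies the hypothesis only for the opposite order, via Lemma~\ref{lem:basic_algebras} and the isomorphism $\Hom_{A^+}(A^+e_j,A^+e_i)\cong e_jA^+e_i$.
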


\begin{proof}
The proofs of the first two statements proceed by induction on $n$. 
  The first
  statement is shown as follows. Let $t$ be a maximal weight with respect to $\unlhd$. Since $\Hom_{A}(P(j),P(i))\neq 0$ only if $i \unlhd j$, all composition factors of $P(t)$ are isomorphic to $L(t)$, and by $\End_{A}(P(t))\cong k$, the composition
  multiplicity of $L(t)$ in $P(t)$ must be one, which implies
  $L(t) = P(t)$. The trace $J$ of the simple
  projective module $P(t)$ in $A$ is a sum, hence a direct sum, of copies
  of $P(t)=L(t)$ and thus projective as a left module. Moreover it is generated,
  as a two-sided ideal,
  by an idempotent generating $P(t)$ and it has vanishing radical. Therefore,
  $J$ is a heredity ideal and the module $P(t) = L(t)$ is the corresponding
  standard module. The quotient $A/J$ satisfies an analogous condition and
  has one indecomposable projective module less. Hence, $(A,\unlhd)$
  is quasi-hereditary with simple standard modules.

  For the second statement, consider a minimal weight $b$ with respect to $\unlhd$. Since by assumption $\Hom_{A}(P(j),P(i))\neq 0$ only if $i \unlhd j$, the trace $J'$
  of $P(b)$ in $A$ is a direct sum of copies of $P(b)$, as $P(b)$ does not
  map non-trivially
  to any other projective module. Thus, $J'$ is projective as a left
  $A$--module. By the assumption $\End_{A}(P(b))\cong k$, the radical of $P(b)$ does
  not have composition factors of type $L(b)$, which implies that
  $J' \rad(A) J' = 0$. As $J'$ is generated, as a two-sided ideal,
  by an idempotent generating $P(b)$, it is a heredity ideal.
  The quotient $A/J'$ satisfies an analogous condition and has one
  indecomposable projective module less. Therefore, it is quasi-hereditary
with projective standard modules, when using
the opposite partial order on weights.

The third statement is a special case of the first and the second one.
\end{proof}

\section{Characterising Reedy algebras in terms of quasi-hereditary structures}

By \cite[Theorem 4.22]{DS} Reedy algebras are quasi-hereditary.
The converse is not true, as illustrated by Examples \ref{ex:matrix} and
\ref{ex:Steffens_example}
above. This raises the question of how to characterise Reedy algebras as
quasi-hereditary algebras with additional structure.
Theorem~\ref{thm:characterization} answers this question, characterises the
subalgebras $A^+$ and $A^-$ of a Reedy algebra by strong properties
and also gives a
different proof of quasi-heredity of Reedy algebras via \cite{Koe2}. In addition, Corollary \ref{cor:heredity chain of Reedy} provides yet another proof of quasi-heredity of Reedy algebras. The
proof in \cite{DS} is based on an analysis of standard modules while the
proofs in this section focus on heredity chains.

\begin{theorem}
\label{thm:characterization}
Let $A$ be a finite-dimensional algebra over a field $k$ and let $B$ and $C$ be unital $k$--subalgebras of $A$. Then the following are equivalent:
\begin{itemize}
\item[(i)] The algebra $A$ is Reedy with $A^-=B$ and $A^+=C$. 
\item[(ii)] The subalgebras $B$ and $C$ are elementary, $S\coloneqq B\cap C$ is a maximal semisimple subalgebra of $B$ and $C$ and the multiplication in $A$ induces an isomorphism
\begin{equation}
C\otimes_{S}B\rightarrow A,
\nonumber
\end{equation}
of $C$--$B$--bimodules. In addition, after identifying the weights of $B$ with those of $C$,  the algebra $B$ is quasi-hereditary with simple standard modules and $C$ is quasi-hereditary with projective standard modules for the same partial order. 
\item[(iii)] The subalgebras $B$ and $C$ are elementary, $S\coloneqq B\cap C$ is a maximal semisimple subalgebra of both $B$ and $C$, and there are bijections between the weights of $A$, $B$ and $C$. In addition, the algebra $A$ is quasi-hereditary, $B$ is an exact Borel subalgebra of $A$ and $C$ is a Delta subalgebra of $A$, where the partial orders on the weights of $B$ and $C$ are the same. 
\end{itemize}
\end{theorem}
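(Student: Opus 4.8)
The plan is to prove the cyclic chain of implications $(i)\Rightarrow(ii)\Rightarrow(iii)\Rightarrow(i)$. The device common to all three steps is the multiplication map $\mu\colon C\otimes_S B\rightarrow A$, $c\otimes b\mapsto cb$, which is always a homomorphism of $C$--$B$--bimodules. Whenever $S$ equals $ke_0\times\cdots\times ke_n$ for idempotents $e_l$ that are primitive in both $B$ and $C$, one has, as vector spaces, $C\otimes_S B\cong\bigoplus_l Ce_l\otimes_k e_lB\cong\bigoplus_{i,j}\bigl(\bigoplus_l e_jCe_l\otimes_k e_lBe_i\bigr)$ while $A\cong\bigoplus_{i,j}e_jAe_i$, and under these identifications the $(i,j)$--component of $\mu$ is precisely the map \eqref{eq:Reedy_decomp} for $A^+=C$ and $A^-=B$. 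Thus ``$\mu$ is an isomorphism'' is equivalent to condition (iii) of Definition~\ref{def:Reedy}, and in each direction the task reduces to producing the correct directedness and quasi-hereditary data on $B$, $C$ and $A$.

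$(i)\Rightarrow(ii)$. By Lemma~\ref{lem:basic_algebras}, $C=A^+$ and $B=A^-$ are elementary with $E=\{e_0,\dots,e_n\}$ as a complete set of primitive orthogonal idempotents of each, and $ke_0\times\cdots\times ke_n$ is a maximal semisimple subalgebra of both. A nonzero element of $e_j(B\cap C)e_i$ with $i\neq j$ would lie in $e_jCe_i\cap e_jBe_i$ and hence force $\deg(e_j)>\deg(e_i)$ and $\deg(e_j)<\deg(e_i)$ simultaneously by Definition~\ref{def:Reedy}(i)--(ii); since also $e_i(B\cap C)e_i\subseteq e_iCe_i=ke_i$, this yields $S\coloneqq B\cap C=ke_0\times\cdots\times ke_n$. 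The components of $\mu$ are the isomorphisms \eqref{eq:Reedy_decomp}, so $\mu$ is an isomorphism of bimodules. Finally, Proposition~\ref{prop:directed} shows that $C$ is quasi-hereditary with projective standard modules and $B$ is quasi-hereditary with simple standard modules, both for the order $i\lhd j\Leftrightarrow\deg(e_i)>\deg(e_j)$; identifying the weights of $B$ and $C$ via $E$ turns this into one partial order, which is $(ii)$.

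$(ii)\Rightarrow(iii)$. As $B$ and $C$ are elementary with $S=B\cap C$ maximal semisimple in each, Wedderburn--Malcev gives $S\cong B/\rad(B)\cong C/\rad(C)$, so $S=ke_0\times\cdots\times ke_n$ with the $e_l$ a complete set of primitive orthogonal idempotents in both $B$ and $C$; this is the identification of weights used in $(ii)$. From $A\cong C\otimes_S B$ one obtains for every $B$--module $M$ a natural isomorphism $A\otimes_B M\cong C\otimes_S(M|_S)$, which is exact in $M$ since $S$ is semisimple; so induction from $B$ is exact, and $A\otimes_B L^B(i)\cong C\otimes_S(L^B(i)|_S)=C\otimes_S ke_i=Ce_i=P^C(i)$ as $C$--modules. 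Feeding the triangular decomposition $A\cong C\otimes_S B$, together with the directedness of $B$ (simple standard modules) and of $C$ (projective standard modules), into the structure theory of exact Borel subalgebras and Cartan decompositions from \cite{Koe2} then yields that $A$ is quasi-hereditary, that $\Delta^A(i)\cong A\otimes_B L^B(i)$ (hence $\Delta^A(i)|_C\cong P^C(i)$), that $B$ is an exact Borel subalgebra and that $C$ is a Delta subalgebra; the simple $A$--modules are then the tops $L^A(i)=\mathrm{top}(\Delta^A(i))$, which gives the bijection of weight sets, and the orders on the weights of $B$ and $C$ agree by hypothesis. An alternative route, matching the ``heredity chains'' viewpoint of the introduction, orders the $e_i$ along a linear extension of $\unlhd$ and checks directly, using $A\cong C\otimes_S B$ and the directedness of $B$ and $C$, that the resulting increasing chain of idempotent ideals of $A$ is a heredity chain with the expected standard modules; this reproves that Reedy algebras are quasi-hereditary. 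I expect this implication to be the main obstacle: the delicate point is to control how the heredity ideals of $A$ interleave with the two oppositely directed subalgebras --- equivalently, to match $\Delta^A(i)$ with $A\otimes_B L^B(i)$ --- and it is exactly here that the input from \cite{Koe2}, or the explicit heredity-chain construction, is needed.

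$(iii)\Rightarrow(i)$. As $B$ and $S$ are unital subalgebras, $1_A=1_B=1_S=e_0+\cdots+e_n$, so $E=\{e_0,\dots,e_n\}$ is a complete set of pairwise orthogonal idempotents of $A$, not necessarily primitive in $A$ (which Definition~\ref{def:Reedy} does not require). Since $B$ is an exact Borel subalgebra we have $\Delta^B(i)=L^B(i)$, and the standard exact sequence $0\rightarrow Q^B(i)\rightarrow P^B(i)\rightarrow\Delta^B(i)\rightarrow0$ shows that $P^B(i)$ has $L^B(i)$ with multiplicity one --- so $e_iBe_i\cong k$ --- and all remaining composition factors of the form $L^B(j)$ with $j\rhd i$; hence $e_jBe_i\neq0$ with $i\neq j$ forces $j\rhd i$. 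Dually, $C$ being a Delta subalgebra has $\Delta^C(i)=P^C(i)$, and the sequence $0\rightarrow K^C(i)\rightarrow\Delta^C(i)\rightarrow L^C(i)\rightarrow0$ gives $e_iCe_i\cong k$ and that $e_jCe_i\neq0$ with $i\neq j$ forces $j\lhd i$. Choosing $\deg\colon\{e_0,\dots,e_n\}\rightarrow\mathbb{N}$ to be any order-reversing injection on $(I,\unlhd)$, conditions (i) and (ii) of Definition~\ref{def:Reedy} hold for $A^+\coloneqq C$ and $A^-\coloneqq B$. For condition (iii) one needs $\mu\colon C\otimes_S B\rightarrow A$ to be an isomorphism: this is the Cartan decomposition that \cite{Koe1,Koe2} attach to the pair consisting of the exact Borel subalgebra $B$ and the Delta subalgebra $C$ meeting in $S$, consistent with the dimension count $\dim_k A=\sum_i\dim_k\Delta^A(i)\cdot\dim_k(e_iB)=\sum_i\dim_k(Ce_i)\cdot\dim_k(e_iB)=\dim_k(C\otimes_S B)$ that follows from $A\otimes_B L^B(i)\cong\Delta^A(i)$ and $\Delta^A(i)|_C\cong P^C(i)$. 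By the reduction in the first paragraph, $\mu$ being an isomorphism yields \eqref{eq:Reedy_decomp}, so $A$ is Reedy with $A^+=C$ and $A^-=B$, which closes the cycle.
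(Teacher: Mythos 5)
Your argument follows essentially the same route as the paper's: Lemma~\ref{lem:basic_algebras} and Proposition~\ref{prop:directed} give (i)$\Rightarrow$(ii); the directedness conditions of Definition~\ref{def:Reedy} are extracted from the simple/projective standard modules of $B$ and $C$ for the converse; and the passage between the triangular decomposition in (ii) and the exact Borel/Delta subalgebra formulation in (iii) is delegated to \cite[Theorem~4.1]{Koe2}, exactly as in the paper. The only organisational differences are that you run the cycle (i)$\Rightarrow$(ii)$\Rightarrow$(iii)$\Rightarrow$(i) rather than proving (i)$\Leftrightarrow$(ii) and then (ii)$\Leftrightarrow$(iii), and that you verify the directedness of $e_jBe_i$ and $e_jCe_i$ directly from the standard exact sequences instead of citing Madsen's criterion; both choices are harmless.

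There is, however, one step where your write-up is circular and needs repair: the bijection between the weights of $A$ and those of $B$ and $C$ in (ii)$\Rightarrow$(iii). By definition an exact Borel subalgebra has the \emph{same} index set of simples as $A$, so \cite[Theorem~4.1]{Koe2} takes this bijection as a hypothesis; you cannot obtain it as an output of that theorem by reading off $L^A(i)=\mathrm{top}(\Delta^A(i))$, since a priori the idempotents $e_i$ need not be primitive in $A$, nor need the modules $Ae_i$ be pairwise non-isomorphic, so $A$ could have more or fewer simples than $B$. Your computation that $A\otimes_B-$ is exact and sends $L^B(i)$ to $Ce_i$ is correct and useful, but it does not by itself show that the $n+1$ modules $A\otimes_BL^B(i)$ are the standard modules of a quasi-hereditary structure on $A$ with exactly $n+1$ weights. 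The paper closes this gap by first running (ii)$\Rightarrow$(i) and then invoking \cite[Theorem~4.12]{DS}, which establishes that a Reedy algebra is quasi-hereditary with precisely one simple module per idempotent $e_i$; only after that is \cite[Theorem~4.1]{Koe2} applicable. Inserting that step (or an equivalent direct argument that the chain of ideals $A\varepsilon_0A\subseteq A(\varepsilon_0+\varepsilon_1)A\subseteq\cdots$ is a heredity chain realising all $n+1$ weights) makes your proof complete. The same remark applies to (iii)$\Rightarrow$(i): there the bijection of weights is part of the hypothesis, so your appeal to the Cartan decomposition of \cite{Koe2} is legitimate, and indeed the dimension count you offer is only a consistency check, not a substitute for it.
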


\begin{proof}
(i)$\Rightarrow$(ii)  For any Reedy algebra $A$, by Lemma \ref{lem:basic_algebras} the algebras $B\coloneqq A^-$ and $C\coloneqq A^+$ are elementary and $S$ is a maximal semisimple subalgebra of both $B$ and $C$. The claimed quasi-heredity of $B$ and $C$ follows from Proposition~\ref{prop:directed}. 
The Reedy decomposition of $A$ can be rewritten as a collection of isomorphisms $e_jC\otimes_{S}Be_i\cong e_jAe_i$ of $k$--vector spaces given by multiplication; the desired isomorphism $C\otimes_{S}B\rightarrow A$  now follows from Remark~\ref{rem:multiplications}.

(ii)$\Rightarrow$(i) The restriction to $S$ of the canonical epimorphism $C\to C/\rad C$ has kernel $S \cap \rad C$ which is a nilpotent ideal of $S$ and hence contained in $\rad S=0$. As $C$ is elementary, $C/\rad C$ is isomorphic to either $S$ or to $S \times T$ where $T$ is another elementary semisimple subalgebra. Lifting idempotents in $T$ to idempotents in $C$ would contradict the assumption of $S$ being a maximal semisimple subalgebra of $C$. Thus $C/\rad C\cong S$ and, in the same way, $B/\rad B \cong S$. Like $C/\rad C$, the semisimple algebra $S$ is elementary and thus can be written as $S= ke_0\times ke_1 \cdots \times ke_n$. Here $E:=\{e_0,e_1,\dots,e_n\}$ is a complete set of primitive pairwise orthogonal idempotents of $S$, $B$ and $C$ and a complete set of pairwise orthogonal idempotents of $A$.

We may assume that $Ce_0C\subseteq C(e_0+e_1)C \subseteq \cdots \subseteq C(e_0+e_1+\cdots+e_n)C=C$ is a heredity chain  of $C$ where $e_0\rhd e_1 \rhd\cdots \rhd e_n$ in the quasi-hereditary order of $C$; cf.~\cite[Proposition~1.3]{japan}. Since by assumption the standard modules of $C$ are the projectives $Ce_i$ for all $i=0,1,...,n$, it follows from \cite[Lemma~1.6]{DRapp} and from $C$ being elementary that $e_iCe_i\cong k$ as $k$--vector spaces. Furthermore, by \cite[Lemma~1.2]{DRapp}, the non-vanishing of $\Hom_{C}(Ce_j,\Delta^C(i))=e_jCe_i$ for $i\neq j$ implies $e_j\lhd e_i$. Moreover, $B$ is quasi-hereditary with respect to the same order as that of $C$ and has simple standard modules. Thus, a heredity chain of $B$ is given by $Be_0B\subseteq B(e_{0}+e_{1})B \subseteq \cdots \subseteq B(e_0+e_{1}+\cdots+e_n)B=B$, where for all $i$ there is an isomorphism of $k$--vector spaces $e_iBe_i\cong k$ and, for $i\neq j$, non-vanishing of $\Hom_{B}(Be_j, Be_i)\cong e_jBe_i$ implies $e_i\lhd e_j$. We define a degree function $\deg\colon E\rightarrow\mathbb{N}$ by $\deg(e_i):=i$. The subalgebras $A^+\coloneqq C$ and $A^-\coloneqq B$ of $A$ satisfy conditions (i) and (ii), respectively, in Definition \ref{def:Reedy}. Finally, for all $i,j$, the isomorphism $C\otimes_{S}B\rightarrow A$, induces an isomorphism 
$e_jC \otimes_{S}Be_i\rightarrow e_jAe_i$ of $k$--vector spaces given by multiplication, which is the meaning of condition (iii) in Definition \ref{def:Reedy}.

(ii)$\Rightarrow$(iii) We may use the already proved implication (ii)$\Rightarrow$(i) which tells us that $A$ is Reedy with $A^-=B$, $A^+=C$ and a complete set of pairwise orthogonal idempotents $E=\{e_0,e_1,\dots,e_n\}$. By \cite[Lemma~4.12]{DS} for all $i=0,1,\dots,n$ the $A$--module $Ae_i/\sum_{\deg(e_l)<\deg(e_i)}\mathrm{Tr}_{Ae_l}Ae_i$ has a unique simple factor $L_{e_i}$ and by \cite[Theorem 4.14]{DS} the assignment $e_i\mapsto L_{e_i}$ defines a bijection between $E$ and the isomorphism classes of  simple left $A$--modules. The cited statements are proved in the language of functor categories but they can be applied to the category of left $A$--modules; for further details see for instance the paragraph before \cite[Lemma~4.21]{DS}. By combining this with Lemma \ref{lem:basic_algebras} we deduce that there are bijections between the weights of $A$, $B$ and $C$. Thus we can now apply \cite[Theorem~4.1]{Koe2}, which is valid without the additional assumption that $k$ is algebraically closed  as we are assuming that $B$ and $C$ are elementary.

(iii)$\Rightarrow$(ii) One can apply directly \cite[Theorem~4.1]{Koe2} (again, the assumption that $k$ is algebraically closed is redundant). 
\end{proof}

\begin{remark}
In the proof of implication (ii)$\Rightarrow$(iii) in Theorem~\ref{thm:characterization} we refer to \cite[Theorem~4.1]{Koe2}. In this reference, the Cartan decomposition of $A$ is used to show that the ideal $J_t$ generated by the idempotents of highest weight is heredity, and that $A/J_t$ admits a Cartan decomposition too (then one proceeds by induction). This is akin to the strategy used in Corollary \ref{cor:heredity chain of Reedy} appearing later in this section, which provides yet another proof of quasi-heredity of Reedy algebras, making the corresponding heredity chains more transparent.
\end{remark}

\begin{remark}
  Theorem \ref{thm:characterization}
  implies that the concept of a Reedy decomposition of a finite-dimensional $k$-algebra coincides with the  concept of a Cartan
  decomposition of a quasi-hereditary algebra used in \cite[Theorem~4.1]{Koe2}. The term Reedy decomposition
  appears to fit much better to this kind of triangular decomposition, and
  thus we would like to suggest to use only this term in the future.
\end{remark}

Theorem~\ref{thm:characterization} provides many new examples of
quasi-hereditary algebras with an exact Borel subalgebra and a Delta
subalgebra. Conversely, it also provides new examples of Reedy algebras.

\begin{example}
  Monomial algebras form a frequently studied class of 
  finite-dimensional algebras, given by quivers and particular relations. It is proved in \cite[Theorem 3.7]{Rasmussen} that every quasi-hereditary monomial algebra has a Reedy decomposition. The main ingredient of the proof is an explicit construction of an exact Borel subalgebra in terms of certain paths in the quiver, and a similar construction of a Delta subalgebra. These subalgebras then produce a Reedy decomposition, which in this situation can be checked directly, in the spirit of Theorem \ref{thm:characterization}.
\end{example}

Another way of producing new Reedy algebras is by taking tensor products.

\begin{proposition}
\label{prop:tensor}
Tensor products of Reedy algebras are Reedy algebras.
\end{proposition}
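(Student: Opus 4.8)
The plan is to verify Definition~\ref{def:Reedy} directly for the tensor product, starting from the obvious candidates. Let $(A,A^+,A^-)$ be Reedy with a complete set of pairwise orthogonal idempotents $E=\{e_0,\dots,e_n\}$ and degree function $\deg$, and let $(B,B^+,B^-)$ be Reedy with a complete set of pairwise orthogonal idempotents $F=\{f_0,\dots,f_m\}$ and degree function $\delta$. For the $k$-algebra $A\otimes_k B$ I would take the complete set of pairwise orthogonal idempotents $\{e_i\otimes f_j\}$ (completeness and orthogonality being immediate from those of $E$ and $F$), the subalgebras $(A\otimes_k B)^+\coloneqq A^+\otimes_k B^+$ and $(A\otimes_k B)^-\coloneqq A^-\otimes_k B^-$ (these are subalgebras of $A\otimes_k B$ sharing its unit, since tensoring over a field preserves injectivity of the inclusions), and the degree function $\deg(e_i\otimes f_j)\coloneqq\deg(e_i)+\delta(f_j)$.

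First I would check conditions (i) and (ii). For (i) one computes $(e_i\otimes f_j)(A^+\otimes_k B^+)(e_i\otimes f_j)=e_iA^+e_i\otimes_k f_jB^+f_j\cong k\otimes_k k\cong k$; and for $(i,j)\neq(i',j')$, non-vanishing of $(e_{i'}\otimes f_{j'})(A^+\otimes_k B^+)(e_i\otimes f_j)=e_{i'}A^+e_i\otimes_k f_{j'}B^+f_j$ forces both tensor factors to be non-zero, so a short case distinction on whether $i=i'$ (using Definition~\ref{def:Reedy}(i) for $A$ and for $B$, and that $e_iA^+e_i\neq 0$ always) yields $\deg(e_{i'})\geqslant\deg(e_i)$ and $\delta(f_{j'})\geqslant\delta(f_j)$ with at least one inequality strict, hence $\deg(e_{i'}\otimes f_{j'})>\deg(e_i\otimes f_j)$. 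Condition (ii) is entirely symmetric, using Definition~\ref{def:Reedy}(ii) for $A$ and $B$ together with the same degree function.

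The substance of the argument is condition (iii). Fixing idempotents $e_i\otimes f_{i'}$ and $e_j\otimes f_{j'}$, the source of the multiplication map in \eqref{eq:Reedy_decomp} is
\[
\bigoplus_{l,l'}\bigl(e_jA^+e_l\otimes_k f_{j'}B^+f_{l'}\bigr)\otimes_k\bigl(e_lA^-e_i\otimes_k f_{l'}B^-f_{i'}\bigr).
\]
Using only associativity and the symmetry isomorphism of $\otimes_k$ to interchange the second and third tensor factors, and then distributing the direct sum, this is naturally isomorphic to
\[
\Bigl(\bigoplus_l e_jA^+e_l\otimes_k e_lA^-e_i\Bigr)\otimes_k\Bigl(\bigoplus_{l'} f_{j'}B^+f_{l'}\otimes_k f_{l'}B^-f_{i'}\Bigr),
\]
and the Reedy decompositions of $A$ and of $B$ identify the two factors with $e_jAe_i$ and $f_{j'}Bf_{i'}$ respectively, hence the whole with $e_jAe_i\otimes_k f_{j'}Bf_{i'}=(e_j\otimes f_{j'})(A\otimes_k B)(e_i\otimes f_{i'})$. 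The one point requiring care — and the main (though mild) obstacle — is to confirm that this composite of canonical isomorphisms is precisely the map induced by multiplication in $A\otimes_k B$. Tracing an elementary tensor $(x\otimes x')\otimes(y\otimes y')$ through the symmetry swap and the two Reedy multiplication maps produces $xy\otimes x'y'$, which is exactly $(x\otimes x')(y\otimes y')$ computed via $(a\otimes a')(b\otimes b')=ab\otimes a'b'$; so the composite is the multiplication map, as required.

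With (i)--(iii) in place, $(A\otimes_k B,\,A^+\otimes_k B^+,\,A^-\otimes_k B^-)$ is a Reedy algebra for the stated idempotents and degree function, which proves the proposition. One could alternatively deduce the result from Theorem~\ref{thm:characterization}, but that route additionally requires knowing that tensor products of elementary algebras are elementary, that $(A^+\cap A^-)\otimes_k(B^+\cap B^-)$ is a maximal semisimple subalgebra of each of $A^{\pm}\otimes_k B^{\pm}$, and that tensor products of quasi-hereditary algebras are again quasi-hereditary; the direct verification above avoids appealing to the last of these.
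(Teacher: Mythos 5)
Your proof is correct and follows essentially the same route as the paper: the same choice of idempotents $e_i\otimes f_j$, subalgebras $A^{\pm}\otimes_k B^{\pm}$ and additive degree function, with conditions (i)--(iii) verified componentwise and the Reedy decomposition obtained by rearranging tensor factors. Your extra care in checking that the composite of canonical isomorphisms really is the multiplication map is a welcome refinement of the same argument.
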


\begin{proof}
  Let $(A,A^+,A^-)$ and $(B,B^+, B^-)$ be Reedy algebras over the field $k$
  with respective sets of idempotents $\{ e_0, \dots, e_n\}$ and
  $\{ f_0, \dots, f_m\}$. We show that $C\coloneqq A \otimes_k B$ is Reedy with
  respect to the following data: $C^+\coloneqq A^+ \otimes_k B^+$ and
  $C^-\coloneqq A^- \otimes_k B^-$ are subalgebras of $C$ with common set of pairwise
  orthogonal idempotents $e_i \otimes_k f_j$, $0 \leq i \leq n$, $0 \leq j \leq m$,
  and degree function $\deg (e_i \otimes_k f_j) \coloneqq \deg(e_i) + \deg(f_j)$.

  Then $(e_i \otimes_k f_j) C^+ (e_i \otimes_k f_j) =
  (e_i \otimes_k f_j) (A^+ \otimes_k B^+) (e_i \otimes_k f_j) =
  e_i A^+ e_i \otimes_k f_j B^+ f_j \cong k \otimes_k k \cong k$, and
  similarly for $C^-$.

  Moreover, non-vanishing of $(e_i \otimes_k f_j) C^+ (e_h \otimes_k f_l) =
  e_i A^+ e_h \otimes_k f_j B^+f_l$ implies the inequality
  ${\deg(e_i) + \deg(f_j) \geq
  \deg(e_h) + \deg(f_l)}$ and equality only holds true for $e_i = e_h$ and
  $f_j = f_l$, and similarly for $C^-$.

  Finally, the Reedy decompositions of $A$ and $B$ imply a Reedy decomposition
  of $A \otimes_k B$, since componentwise multiplication induces
  for all $j,l,h,s$ isomorphisms:
  \begin{align*}
  &\bigoplus_{i,t} (e_j \otimes_k f_l) C^+ (e_i \otimes_k f_t) \otimes_k
  (e_i \otimes_k f_t) C^- (e_h \otimes_k f_s)\\ = &\bigoplus_{i,t} (e_j A^+ e_i \otimes_k f_l B^+ f_t)
  \otimes_k (e_i A^-e_h \otimes_k f_t B^- f_s) \\
  \cong & e_j A e_h \otimes_k f_l B f_s \\
  = & (e_j \otimes_k f_l) (A \otimes_k B) (e_h \otimes_k f_s). 
  \end{align*}
\end{proof}

The rest of this section contains properties of Reedy algebras to be used in the sequel, motivated by similar properties of quasi-hereditary algebras.

\begin{lemma}
\label{lem:plus_minus}
For a Reedy algebra $(A,A^+,A^-)$ with a degree function $\deg\colon E=\{e_0,e_1,\dots,e_n\}\rightarrow\mathbb{N}$, let $t\coloneqq\min\{\deg(e)\colon e \in E\}$ and 
let $\varepsilon_t$ be the sum of the elements of $E$ having degree precisely $t$. Then $J_t\coloneqq A\varepsilon_tA$ is a heredity ideal and 
 the multiplication map $$A^+\varepsilon_t \otimes_{S_t} \varepsilon_t A^-\rightarrow A\varepsilon_t A$$ is an isomorphism of $A^+$--$A^-$--bimodules.
\end{lemma}

\begin{proof}
Let $e, e'$ be elements in $E$ of minimum degree $t$. By the defining property (iii) of Reedy algebras there is an isomorphism of $k$--vector spaces, 
 \begin{equation}
e'Ae \cong \bigoplus_{i=0}^{n} \,e'A^+e_i\, \otimes_{k}\, e_iA^-e. 
\nonumber
\end{equation}
Since the degrees of $e$ and $e'$ are minimal, by the defining properties (i) and (ii) of Reedy algebras, we obtain that $e'Ae$ is either zero or $e=e'$ in which case $eAe\cong k$. This proves that $\varepsilon_tA\varepsilon_t$ is isomorphic to the semisimple algebra $S_t\coloneqq \prod_{\deg(e_i)=t}k e_t$. In particular, $J_t \mathrm{rad}(A) J_t=0$. To prove that $J_t$ is a heredity ideal of $A$, it remains to show that it is a projective left $A$--module. By \cite[Statement~7]{DR}, it suffices to verify that the multiplication homomorphism $m:A\varepsilon_t\otimes_{S_t}\varepsilon_tA\rightarrow A\varepsilon_t A$ is injective or, equivalently, to show that the composition of $m$ with the inclusion $\iota$ of $A\varepsilon_t A$ in $A$ is injective. Notice that, by using Remark \ref{rem:multiplications} and parts (i) and (ii) of Definition \ref{def:Reedy},  for any idempotent $e\in E$ of degree $t$, we have $Ae \otimes_{k} eA= A^+e \otimes_{k} eA^-$, hence $A^+\varepsilon_t\otimes_{S_t}\varepsilon_tA^-= A\varepsilon_t\otimes_{S_t} \varepsilon_tA$ as left $A^+$--$A^-$--bimodules. Thus, $\iota \circ m$ coincides with the restriction of the multiplication map $\bigoplus_{l=0}^n \,A^+e_l\, \otimes_{k}\, e_lA^- \rightarrow A$ to $\bigoplus_{\deg(e_l)=t} \,A^+e_l\, \otimes_{k}\, e_lA^-$, which is injective by Remark \ref{rem:multiplications}.
\end{proof}

Quasi-hereditary algebras are stable under certain natural constructions: the idempotent quotients and centraliser subalgebras arising from the inductive definition in terms of heredity ideals are again quasi-hereditary (see \cite[page 92 and Corollary 3.7]{CPS}). Consequently, each quasi-hereditary algebra comes with two chains of related
quasi-hereditary algebras, one formed by quotient algebras, the other one
by centraliser subalgebras.  Reedy algebras are now shown to come with
similar chains of related Reedy algebras.

 For this, we need to introduce first some notation. Suppose that $(A,A^+,A^-)$ is a Reedy algebra with a degree function $\deg\colon E=\{e_0,e_1,e_2,\dots,e_n\}\rightarrow\mathbb{N}$. We will denote by 
 $\varepsilon_l$ the sum of all elements of $E$ having degree $l$ (here $\varepsilon_l\coloneqq 0$ if no idempotent in $E$ has degree $l$). Sometimes we write $J_l\coloneqq A(\varepsilon_0 +\varepsilon_1+\cdots +\varepsilon_l )A$ and we also set  $J_{-1}\coloneqq 0$. The semisimple subalgebra of $S=A^+\cap A^-$ that is generated by the idempotents of degree $l$ is denoted by $S_l\coloneqq \prod_{\deg(e_i)=l}k e_i$. 

\begin{proposition}
\label{prop:more}
Suppose that $(A,A^+,A^-)$ is a Reedy algebra and let $e=\varepsilon_0+\varepsilon_1+\cdots + \varepsilon_l$. Then the following assertions hold true:
\begin{itemize}
\item[(i)] The algebra $(eAe,eA^+e,eA^-e)$ is a Reedy algebra with degree
  function inherited from~$A$. 
  
\item[(ii)] The algebra $(A/AeA,A^+/A^+eA^+,A^-/A^-eA^-)$ is a Reedy algebra
  with degree function inherited from $A$. 
  
\end{itemize}
\end{proposition}
\begin{proof}
  (i) The degree function of $eAe$ is the restriction of the degree function of
  $A$ to the set $E'$ consisting of all idempotents in $E=\{e_0,e_1,\dots,e_n\}$ having degree
  at most $l$. Then $eA^+e$ and $eA^-e$ satisfy properties (i) and (ii),
  respectively, in Definition \ref{def:Reedy}. Similarly, since $(A,A^+,A^-)$
  is Reedy, we have
 \begin{equation}
eAe \cong \bigoplus_{i=0}^{n} \,eA^+e_i\, \otimes_{k}\, e_iA^-e 
=  \bigoplus_{\deg(e_i)\leqslant l} \,eA^+e_i\, \otimes_{k}\, e_iA^-e.
\nonumber
\end{equation}
(ii)  Let now $E'\coloneq \{e_i +AeA\colon \deg(e_i)>l\}$ and define a degree function $\deg'$ on $E'$ by $\deg'(e_i+AeA)\coloneqq\deg(e_i)$.
For these data, the algebras $A^+/A^+eA^+$ and $A^-/A^-eA^-$ satisfy properties (i) and (ii),
respectively, from Definition \ref{def:Reedy}. Now, recall that
$S\coloneqq A^+\cap A^-$ and observe that
both $A^+/A^+eA^+$ and $A^-/A^-eA^-$ contain a maximal semisimple subalgebra isomorphic to $S/SeS$. We claim that the multiplication map 
\begin{equation*}
A^+/A^+eA^+ \otimes_{S/SeS}  A^-/A^-eA^- \rightarrow A/AeA
\end{equation*}
is an isomorphism of $k$-vector spaces. The proof proceeds by induction on $l$. By Lemma~\ref{lem:plus_minus}, the multiplication map $A^+\varepsilon_0\otimes_{S_0}\varepsilon_0A^-\rightarrow A\varepsilon_0A$ is an isomorphism. Note that $A^+\varepsilon_0\subseteq A^+\varepsilon_0A^+$ and that $A^+\varepsilon_0A^+$ decomposes as $\bigoplus_{i=0}^n A^+\varepsilon_0A^+ e_i$. Comparing degrees implies that  $A^+\varepsilon_0A^+=\bigoplus_{\deg(e_i)=0} A^+\varepsilon_0A^+ e_i\subseteq A^+\varepsilon_0$, so $A^+\varepsilon_0=A^+\varepsilon_0A^+$. Similarly, we have
$\varepsilon_0A^-=A^-\varepsilon_0A^- $, and thus also $A^+\varepsilon_0\otimes_{S_0}\varepsilon_0A^-=A^+\varepsilon_0A^+\otimes_{S_0}A^-\varepsilon_0A^-$. Hence the isomorphism $A^+\otimes_SA^-\rightarrow A$ restricts to an isomorphism
$\alpha: A^+\varepsilon_0A^+\otimes_{S_0}A^-\varepsilon_0A^-\cong A\varepsilon_0A$. Therefore, there is an induced isomorphism $\beta: (A^+\otimes_{S}A^-)/(A^+\varepsilon_0A^+\otimes_{S_0}A^-\varepsilon_0A^-)\cong A/ A\varepsilon_0A$. To prove the claim  for the base case of the induction, it remains to check that the canonical map 
\begin{equation}
\label{eq:mult}
\gamma: (A^+/A^+\varepsilon_0 A^+) \otimes_{S/S\varepsilon_0 S}  (A^-/A^-\varepsilon_0 A^-) \rightarrow (A^+\otimes_{S}A^-)/(A^+\varepsilon_0A^+\otimes_{S_0}A^-\varepsilon_0A^-)
\end{equation}
which sends $\overline{a}\otimes_{S/S\varepsilon_0 S}\overline{b}$ to $\overline{a\otimes_{S} b}$ is an isomorphism of $k$--vector spaces. 
Let $\pi\colon A^+\rightarrow A^+/A^+ \varepsilon_0A^+$ be the canonical surjection.
Multiplying $\pi$ on the right by any idempotent $\eta$ of degree greater than 0 and comparing degrees shows that
$\ker(\pi)\eta=A^+ \varepsilon_0 A^+\eta=0$, so that $(A^+/A^+\varepsilon_0 A^+)\eta\cong A^+\eta$ as left $A^+$--modules. Similarly one has $\eta (A^-/A^- \varepsilon_0 A^-)\cong \eta A^-$ as right $A^-$--modules. This implies that the left hand side of \eqref{eq:mult} is isomorphic to $\bigoplus_{\deg(\eta)>0}A^+\eta\otimes_{k}\eta A^-$, which in turn is isomorphic to $(\bigoplus_{i=0}^n A^+e_i\otimes_k e_iA^-)/(\bigoplus_{\deg(e_i)=0}  A^+e_i\otimes_k e_iA^-)$ and hence to the right hand side of \eqref{eq:mult}. Thus $\gamma$ is bijective and the induction start is finished.

To continue the induction, we use that in the Reedy algebra $\overline{A}\coloneqq A/J_0$ the two-sided ideal generated by the idempotents of lowest degree is $J_1/J_0$. This allows to proceed exactly as above and to prove that $(A/J_0)/(J_1/J_0)\cong A/J_1$ is Reedy.
\end{proof}

 With the notation introduced before Proposition \ref{prop:more}, we have the following:

\begin{corollary}
\label{cor:heredity chain of Reedy}
A Reedy algebra $(A,A^+,A^-)$ with a degree function $\deg\colon E=\{e_0,e_1,e_2,\dots,e_n\}\rightarrow\mathbb{N}$ admits a heredity chain 
\begin{equation*}
0\subseteq A\varepsilon_0 A\subseteq A(\varepsilon_0+\varepsilon_1 )A \subseteq \cdots \subseteq A(\varepsilon_0 +\varepsilon_1+\cdots +\varepsilon_m   )A=A
\end{equation*}
(here $m\geq \max \{\deg(e_i)\colon i=0, \ldots, n \}$).
The corresponding partial order to this heredity chain (as recalled in Section \ref{sec:two}) satisfies $i \lhd j$ if and only if $\deg(e_i)>\deg(e_j)$. 
In addition, for $e=\varepsilon_0+\varepsilon_1+\cdots + \varepsilon_l$, the centraliser subalgebra $eAe$ is a quasi-hereditary algebra with a heredity chain
$$0\subseteq eJ_0e\subseteq e J_1e \subseteq \cdots \subseteq eJ_le=eAe;$$
and the quotient algebra $A/AeA$ is a quasi-hereditary algebra with a heredity chain 
  $$0\subseteq J_{l+1}/J_{l}\subseteq J_{l+2}/J_l \subseteq \cdots \subseteq
  J_m/J_l=A/J_l.$$
\end{corollary}
\begin{proof}
By Proposition \ref{prop:more}(ii) $A/J_l$ is a Reedy algebra, with a degree function naturally inherited from $A$, and the idempotents of lowest degree in $A/J_l$ generate the ideal $J_{l+1}/J_l$. By Lemma \ref{lem:plus_minus} the ideal $J_{l+1}/J_l$ is a heredity ideal of $A/J_l$, thus $A$ is quasi-hereditary. In particular, the Reedy algebra structure of the algebras $eAe$ and $A/J_l$ found in Proposition \ref{prop:more} implies their quasi-heredity with heredity chains as claimed.
\end{proof}

\begin{remark}
Proposition \ref{prop:more} can alternatively be proved using the equivalence (i)$\Leftrightarrow$(iii) in Theorem \ref{thm:characterization} and results from \cite{CK} (see also \cite[Corollary 5.3]{Koe1}). By Theorem \ref{thm:characterization}, $(A,\unlhd)$ is quasi-hereditary, $A^-$ is an exact Borel subalgebra of $A$ and $A^+$ is a Delta subalgebra of $A$ (that is, $(A^+)^{op}$ is an exact Borel subalgebra of $A^{op}$). Note that $I'\coloneq\{i\in \{0,1,\dots,n\}\colon\deg(e_i)\leq l\}$ is a coideal of $\unlhd$ (meaning that any $j$ satisfying $j\rhd i$ for some $i\in I'$ is contained in $I'$). Observe that $I'$ coincides with $\{i\in \{0,1,\dots,n\}\colon e L^{A^+}(i)\neq 0\}$ and $\{i\in \{0,1,\dots,n\}\colon e L^{A^-}(i)\neq 0\}$. Thus, \cite[Theorem 5.9]{CK} implies that $(eAe,\unlhd)$ is still quasi-hereditary with an exact Borel subalgebra $eA^-e$ and a Delta subalgebra $e A^+ e$. Moreover, $eA^-e$ and $eA^+e$ must be elementary (as $A^-$ and $A^+$ are) and $eA^-e\cap eA^+e=eSe$ is a maximal semisimple subalgebra of both $eA^-e$ and $eA^+e$. It also follows from \cite[Theorem 5.9]{CK} that the canonical ring homomorphisms $\pi^-:A^-/A^-eA^- \to A/AeA$ and $\pi^+:A^+/A^+eA^+ \to A/AeA$ are injective and turn $A^-/A^-eA^-$ and $A^+/A^+eA^+$ into an exact Borel subalgebra and a Delta subalgebra of $(A/AeA, \unlhd)$, respectively. Note that $A^-/A^-eA^-$ and $A^+/A^+eA^+$ are elementary and the pullback of $\pi^-$ and $\pi^+$ is $S/SeS$, which is isomorphic to a maximal semisimple subalgebra of both $A^-/A^-eA^-$ and $A^+/A^+eA^+$. The result now follows from Theorem \ref{thm:characterization}.
\end{remark}

\section{Constructing and characterising Reedy algebras via
  idempotent subalgebras and quotient algebras}

In Theorem~\ref{thm:main} below we characterise Reedyness of $A$ in terms of Reedyness of $A/AeA$ and $eAe$ for certain idempotents $e$ of $A$. This is an analogue for Reedy algebras of Dlab and Ringel's construction and
characterisation of quasi-hereditary algebras along heredity chains, in
\cite[Theorem 1]{DLcomp}. We will make use of the following:

\begin{setup}
\label{setup}
Let $A$ be a finite-dimensional algebra over $k$ with a complete set $E\coloneqq\{e_0,e_1,\dots,e_n\}$ of pairwise orthogonal idempotents and assume that the following data are given:
\begin{itemize}
\item[(i)] a degree function $\deg\colon \{e_0,e_1,\dots,e_n\}\rightarrow \mathbb{N}$,
\item[(ii)] a subalgebra $A^+\subseteq A$ which contains the elements of $E$ and satisfies condition (i) from Definition \ref{def:Reedy},
\item[(iii)] a subalgebra $A^-\subseteq A$ which contains the elements of $E$ and satisfies condition (ii) from Definition \ref{def:Reedy}.
\end{itemize}
In addition, let $\varepsilon_l$ denote the sum of all elements of $E$ having degree $l$ (as before, $\varepsilon_l\coloneqq 0$ if no idempotent in $E$ has degree $l$).
\end{setup}
For such an algebra we consider the chain of ideals
\begin{equation}
\label{eq:candidate_chain}
0\subseteq A\varepsilon_0 A\subseteq A(\varepsilon_0+\varepsilon_1 )A \subseteq \cdots \subseteq A(\varepsilon_0 +\varepsilon_1+\cdots +\varepsilon_m   )A=A.
\end{equation}
Sometimes we write $J_l\coloneqq A(\varepsilon_0 +\varepsilon_1+\cdots +\varepsilon_l )A$ and we also set  $J_{-1}\coloneqq 0$. 

By the assumptions in Setup \ref{setup}, the algebra $S\coloneqq A^+\cap A^-=\prod_{\deg(e_i)=0}^{m}k e_i$ is a common maximal semisimple subalgebra of both $A^+$ and $A^-$, cf.~Lemma~\ref{lem:basic_algebras}.
However, as before it need not be a maximal semisimple subalgebra of $A$ itself. For $l=0,1,\dots,m$ we will also use the notation $S_{l}\coloneqq \prod_{\deg(e_i)=l}k e_i$.

\medskip

The connection between Reedy algebras and quasi-hereditary algebras with triangular decomposition
established in Theorem \ref{thm:characterization} suggests another analogy
that will provide a crucial tool for establishing the second main theorem:

\begin{lemma}
\label{lem:direct_sum_characterizations}
For an algebra $A$ satisfying Setup \ref{setup} the following statements
are equivalent:
\begin{itemize}
\item[(i)] The algebra $(A,A^+,A^-)$ is Reedy.
\item[(ii)] For each $l=0,1,\dots,m$ there is an isomorphism of $k$--vector spaces induced by multiplication
\begin{equation}
\bigoplus\limits_{\deg(e_i)=l} (A^+/A^+\varepsilon_{l-1}A^+)e_i\otimes_k e_i (A^-/A^-\varepsilon_{l-1}A^-) \cong J_l/J_{l-1}.
\nonumber
\end{equation}
\item[(iii)] For each $l=0,1,\dots,m$ there is an isomorphism of $k$--vector spaces induced by multiplication
\begin{equation}
\bigoplus\limits_{\deg(e_i)=l} A^+e_i \otimes_k e_i A^- \cong J_l/J_{l-1}.
\nonumber
\end{equation}
\end{itemize}
\end{lemma}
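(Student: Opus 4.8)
The plan is to prove the cycle of implications (i)$\Rightarrow$(iii)$\Rightarrow$(ii)$\Rightarrow$(i), using degree bookkeeping with respect to the idempotents $\varepsilon_l$ and the chain \eqref{eq:candidate_chain}. Throughout I will freely use that, by Setup~\ref{setup}(ii)--(iii) and Lemma~\ref{lem:basic_algebras}, $A^+$ and $A^-$ are elementary with $E$ as primitive idempotents and $S = A^+ \cap A^-$ is maximal semisimple in each, so $A^+ = \bigoplus_{i}A^+e_i$ and $A^- = \bigoplus_i e_iA^-$ as a left $A^+$--module and a right $A^-$--module respectively.

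\emph{(i)$\Rightarrow$(iii).} Assume $(A,A^+,A^-)$ is Reedy. By Remark~\ref{rem:heredity chain of Reedy} the chain \eqref{eq:candidate_chain} is a heredity chain, and multiplication gives the bimodule isomorphism $A^+\otimes_S A^-\cong A$ of Remark~\ref{rem:multiplications}. The point is to identify $J_l = A\varepsilon^{(l)}A$, where $\varepsilon^{(l)}\coloneqq\varepsilon_0+\cdots+\varepsilon_l$, with a sub-bimodule of $A^+\otimes_S A^-$ that matches up the grading by degree. Using conditions (i) and (ii) of Definition~\ref{def:Reedy} (degrees strictly increase in $A^+$, strictly decrease in $A^-$) one checks that under the isomorphism $A^+\otimes_S A^-\cong A$ the ideal $J_l$ corresponds exactly to $\bigoplus_{\deg(e_i)\le l}A^+e_i\otimes_k e_iA^-$: an element $A^+e_i\otimes_k e_iA^-$ lands in $Ae_iA$, which is inside $J_l$ precisely when $\deg(e_i)\le l$, and conversely $J_l$ is generated by the $e_i$ of degree $\le l$. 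Taking the quotient $J_l/J_{l-1}$ then kills exactly the summands with $\deg(e_i)\le l-1$, leaving $\bigoplus_{\deg(e_i)=l}A^+e_i\otimes_k e_iA^-$, which is (iii).

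\emph{(iii)$\Rightarrow$(ii).} Here I must replace $A^+e_i$ by $(A^+/A^+\varepsilon_{l-1}A^+)e_i$ and similarly on the other side, for $e_i$ of degree $l$. The argument mimics the induction-start computation in the proof of Proposition~\ref{prop:more}: let $\pi\colon A^+\to A^+/A^+\varepsilon_{l-1}A^+$ be the projection; for an idempotent $e_i$ of degree $l$, degree comparison forces $\ker(\pi)e_i = A^+\varepsilon_{l-1}A^+e_i=0$ because any path in $A^+$ ending at a vertex of degree $l$ and factoring through a vertex of degree $\le l-1$ would have to both increase and (at the factorisation step) decrease degree, which is impossible. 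Hence $(A^+/A^+\varepsilon_{l-1}A^+)e_i\cong A^+e_i$ as left $A^+/A^+\varepsilon_{l-1}A^+$--modules, and dually $e_i(A^-/A^-\varepsilon_{l-1}A^-)\cong e_iA^-$ as right modules. Substituting these identifications into (iii) yields (ii); the only subtlety is checking that the multiplication map is still well defined after passing to the quotients, which holds because the kernels annihilate the relevant idempotents.

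\emph{(ii)$\Rightarrow$(i).} This is the implication I expect to be the main obstacle, since it is where one must actually build the full Reedy decomposition \eqref{eq:Reedy_decomp} out of the layer-by-layer data. The plan is to argue by induction on $m$, peeling off the bottom layer. For $l=0$ the isomorphism in (ii) reads $\bigoplus_{\deg(e_i)=0}A^+e_i\otimes_k e_iA^-\cong J_0=A\varepsilon_0 A$ (here $\varepsilon_{-1}=0$), which by Lemma~\ref{lem:plus_minus}-type reasoning identifies $J_0$ as a heredity ideal with $\varepsilon_0 A\varepsilon_0 = S_0$. One then shows that $\overline A\coloneqq A/J_0$ together with $\overline{A^+}\coloneqq A^+/A^+\varepsilon_0 A^+$ and $\overline{A^-}\coloneqq A^-/A^-\varepsilon_0 A^-$ again satisfies Setup~\ref{setup} (with degrees inherited, the bottom layer now removed) and that condition (ii) for $A$ in degrees $l\ge 1$ translates precisely into condition (ii) for $\overline A$ in degrees $l-1\ge 0$, using the isomorphisms $\overline{A^+}\,\overline{\varepsilon_{l-1}}\,\overline{A^+}\cong (A^+\varepsilon_{l-1}A^+ + A^+\varepsilon_0 A^+)/A^+\varepsilon_0 A^+$ from the degree-comparison step above. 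By induction $\overline A$ is Reedy, and it remains to assemble a Reedy decomposition of $A$ from that of $\overline A$ and the bottom-layer isomorphism. Concretely, one splits $e_jAe_i$ along the filtration: the part landing in $J_0$ is controlled by the $l=0$ isomorphism and the part surviving in $\overline A$ by the inductive Reedy decomposition, and one checks these glue to the required isomorphism $\bigoplus_l e_jA^+e_l\otimes_k e_lA^-e_i\cong e_jAe_i$. The delicate point is verifying that the associativity/compatibility needed to patch the two pieces together holds — i.e.\ that the multiplication maps on the two graded pieces are restrictions of a single multiplication map on $A^+\otimes_S A^-$ — which is where the hypothesis that $A^+$ and $A^-$ are honest subalgebras of $A$ (not just abstract algebras) is essential.
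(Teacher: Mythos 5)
Your implications (i)$\Rightarrow$(iii) and (iii)$\Leftrightarrow$(ii) are essentially sound and close in spirit to the paper (which instead routes (i)$\Rightarrow$(ii) through Proposition~\ref{prop:more} and Lemma~\ref{lem:plus_minus}, while your direct identification of $J_l$ with $\bigoplus_{\deg(e_i)\le l}A^+e_i\otimes_k e_iA^-$ inside $A^+\otimes_S A^-$ also works, provided you spell out the inclusion $J_l\subseteq\sum_{\deg(e_i)\le l}A^+e_iA^-$, which needs both directedness conditions, not just the observation that $A^+e_iA^-\subseteq Ae_iA$). The degree-comparison step $(A^+/A^+\varepsilon_{l-1}A^+)e_i\cong A^+e_i$ for $\deg(e_i)=l$ is exactly the paper's argument for (ii)$\Rightarrow$(iii).

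The genuine gap is in your closing implication back to (i). Your plan is to pass to $\overline A=A/J_0$ together with $\overline{A^\pm}=A^\pm/A^\pm\varepsilon_0A^\pm$ and induct on the number of degrees. But to apply the induction hypothesis, $\overline{A^\pm}$ must be \emph{subalgebras} of $\overline A$ in the sense of Setup~\ref{setup}, i.e.\ the canonical maps $A^\pm/A^\pm\varepsilon_0A^\pm\to A/J_0$ must be injective, which amounts to $A^\pm\cap A\varepsilon_0A=A^\pm\varepsilon_0A^\pm$. Before Reedyness of $A$ is known this is not automatic: an element of $A^+e_h$ with $\deg(e_h)>0$ could a priori lie in $A\varepsilon_0A$ without factoring through a degree-$0$ idempotent \emph{inside} $A^+$ (the paper only proves such statements, in Proposition~\ref{prop:more}(ii), \emph{after} assuming $A$ is Reedy). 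Ruling this out requires the isomorphisms of (ii)/(iii) in \emph{all} degrees, and your sketch does not supply that argument; you flag the "gluing" as the delicate point but leave it unresolved. The fix is to abandon the quotient-algebra induction entirely: since you have already shown (ii)$\Leftrightarrow$(iii), it suffices to prove (iii)$\Rightarrow$(i), and this is a direct filtration argument on the single multiplication map $\mu\colon\bigoplus_i A^+e_i\otimes_k e_iA^-\to A$. The source is filtered by $F_l\coloneqq\bigoplus_{\deg(e_i)\le l}A^+e_i\otimes_k e_iA^-$, the target by $J_l$, $\mu(F_l)\subseteq J_l$, and (iii) says precisely that the induced maps $F_l/F_{l-1}\to J_l/J_{l-1}$ are isomorphisms; induction on $l$ (base case $l=0$, then the five lemma or a dimension count on $0\to F_{l-1}\to F_l\to F_l/F_{l-1}\to 0$ versus $0\to J_{l-1}\to J_l\to J_l/J_{l-1}\to 0$) gives that $\mu$ is an isomorphism, i.e.\ the Reedy decomposition. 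This is the paper's argument and it avoids the problematic quotients altogether.
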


\begin{proof}
  (i)$\Rightarrow$(ii) By Proposition~\ref{prop:more}(ii), for all $l=0,1,\dots,m$, the algebra $\overline{A}\coloneqq A/J_{l-1}$ is Reedy and $J_{l}/J_{l-1}$ is a heredity ideal at the bottom of a heredity chain of $\overline{A}$.
  As a two-sided ideal, $J_{l}/J_{l-1}$ is generated by the residue classes of the idempotents of $E$ having degree $l$. Thus $J_l/J_{l-1}=\overline{A} \varepsilon_l \overline{A}$. A precise description of the Reedy decomposition of $\overline{A}$ and of the
subalgebras $(\overline{A})^+$ and $(\overline{A})^-$ has been given in
Proposition~\ref{prop:more}(ii).  Combining this with Lemma~\ref{lem:plus_minus}
yields statement (ii).

(ii)$\Rightarrow$(iii)  Let $e$ be any idempotent of degree $l$ and let
$\pi\colon A^+\rightarrow A^+/A^+ \varepsilon_{l-1}A^+$ be the canonical surjection.
Multiplying $\pi$ on the right by $e$ and comparing degrees yields
$\ker(\pi)e=A^+\varepsilon_{l-1}A^+e=0$. Similarly, left multiplication by $e$ of
the canonical surjection $A^-\rightarrow A^-/A^- \varepsilon_{l-1}A^-$ also gives an
isomorphism. Now, statement (iii) follows.

(iii)$\Rightarrow$(i) To be proven is the Reedy decomposition property, i.e.~that multiplication $A^+ \otimes_{S}A^- \to A$ is an isomorphism. The latter map is filtered by the isomorphisms in condition (iii), and thus it is an isomorphism by induction on $l$.
\end{proof}

\begin{theorem}
\label{thm:main}
Let $A$ be an algebra satisfying Setup \ref{setup} and suppose that
$A=A^+\cdot A^-$. Then the following statements are equivalent: 
\begin{itemize}
\item[(i)] The algebra $(A,A^+,A^-)$ is a Reedy algebra.
\item[(ii)] For some idempotent $e=\varepsilon_0+\varepsilon_1+\cdots +\varepsilon_l$ the algebras
\begin{displaymath}
(eAe,eA^+e,eA^-e)\,\,\,\,\, \mbox{and}\,\,\,\,\, (A/AeA,A^+/A^+eA^+,A^-/A^-eA^-)
\end{displaymath}
are Reedy algebras and the multiplication map $Ae\otimes_{eAe}eA\rightarrow AeA$ is bijective.
\item[(iii)] For each idempotent $e=\varepsilon_0+\varepsilon_1+\cdots +\varepsilon_l$ the algebras
\begin{displaymath}
(eAe,eA^+e,eA^-e)\,\,\,\,\, \mbox{and}\,\,\,\,\, (A/AeA,A^+/A^+eA^+,A^-/A^-eA^-)
\end{displaymath}
are Reedy algebras and the multiplication map $Ae\otimes_{eAe}eA\rightarrow AeA$ is bijective.
\end{itemize}
\end{theorem}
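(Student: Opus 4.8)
The plan is to establish the cycle of implications (i)$\Rightarrow$(iii)$\Rightarrow$(ii)$\Rightarrow$(i). The implication (iii)$\Rightarrow$(ii) is immediate, since an idempotent of the required form exists (for instance $e=\varepsilon_0$) and (iii) quantifies over all such $e$. The backbone of the two remaining implications is Lemma~\ref{lem:direct_sum_characterizations}: Reedyness of $(A,A^+,A^-)$ amounts to the assertion that for each $j$ the multiplication map $\mu_j\colon \bigoplus_{\deg(e_i)=j} A^+e_i\otimes_k e_iA^-\to J_j/J_{j-1}$ is an isomorphism. I will repeatedly use a few elementary consequences of Definition~\ref{def:Reedy}(i),(ii) together with degree bookkeeping: if $\deg(e_i)=j$ then $\varepsilon_jA^+\subseteq S_j+J_{j-1}$ and $A^-\varepsilon_j\subseteq S_j+J_{j-1}$; if $\deg(e_i)\le l$ then $A^+e_i\subseteq Ae$ and $e_iA^-\subseteq eAe\subseteq eA$; and if $\deg(e_i)>l$ then $A^+eA^+e_i=0=e_iA^-eA^-$, so that the residue maps $A^+e_i\to(A^+/A^+eA^+)\overline{e_i}$ and $e_iA^-\to\overline{e_i}(A^-/A^-eA^-)$ are isomorphisms.

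For (i)$\Rightarrow$(iii): Proposition~\ref{prop:more}(i) and (ii) already provide that $(eAe,eA^+e,eA^-e)$ and $(A/AeA,A^+/A^+eA^+,A^-/A^-eA^-)$ are Reedy. It remains to see that the multiplication map $Ae\otimes_{eAe}eA\to AeA$ is bijective. Since, by Remark~\ref{rem:heredity chain of Reedy}, $AeA=J_l$ is a term of a heredity chain of the quasi-hereditary algebra $(A,\unlhd)$, it is a stratifying ideal and this map is an isomorphism; alternatively one argues directly: summing the isomorphisms of Lemma~\ref{lem:direct_sum_characterizations} over $j\le l$ gives a bijection $\bigoplus_{\deg(e_i)\le l}A^+e_i\otimes_k e_iA^-\to AeA$, which by the inclusions $A^+e_i\subseteq Ae$, $e_iA^-\subseteq eA$ factors through $Ae\otimes_{eAe}eA$, and Remark~\ref{rem:multiplications} identifies the remaining map.

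For (ii)$\Rightarrow$(i): I must show that every $\mu_j$ is an isomorphism. Surjectivity holds unconditionally once $A=A^+\cdot A^-$: from $J_j=A\varepsilon_jA+J_{j-1}=A^+(A^-\varepsilon_jA^+)A^-+J_{j-1}$ and $A^-\varepsilon_jA^+\subseteq S_j+J_{j-1}$ one obtains $J_j=A^+\varepsilon_jA^-+J_{j-1}$, and $A^+\varepsilon_jA^-$ is exactly the image of $\mu_j$. For injectivity I distinguish two cases. If $j>l$, then all degree-$j$ idempotents survive in the Reedy algebra $A/AeA$; the identifications above give $(A^+/A^+eA^+)\overline{e_i}\cong A^+e_i$, $\overline{e_i}(A^-/A^-eA^-)\cong e_iA^-$, and $J_j/J_{j-1}\cong(J_j/J_l)/(J_{j-1}/J_l)$, and under these $\mu_j$ becomes the level-$j$ multiplication map for $A/AeA$, an isomorphism by Lemma~\ref{lem:direct_sum_characterizations} applied to $A/AeA$. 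If $j\le l$, then $A^+e_i\subseteq Ae$ and $e_iA^-\subseteq eAe\subseteq eA$ show that the multiplication $\bigoplus_{\deg(e_i)=j}A^+e_i\otimes_k e_iA^-\to AeA$ factors through the given isomorphism $Ae\otimes_{eAe}eA\xrightarrow{\sim}AeA$; the Reedy decomposition of $eAe$ from Proposition~\ref{prop:more}(i), whose heredity chain has $j$-th subquotient $eJ_je/eJ_{j-1}e$, together with Lemma~\ref{lem:direct_sum_characterizations} for $eAe$, controls the ``$eAe$-part'' of this map. Comparing $k$-dimensions and using $\dim_k AeA=\dim_k(Ae\otimes_{eAe}eA)$, $\sum_{j\le l}\dim_k(eJ_je/eJ_{j-1}e)=\dim_k eAe$, and the cases $j>l$ already settled, everything reduces to the single identity $\dim_k\big(Ae\otimes_{eAe}eA\big)=\dim_k\big(A^+e\otimes_{eSe}eA^-e\big)$; once this holds one gets $\dim_k(A^+\otimes_S A^-)=\dim_k A$, hence the surjection $A^+\otimes_S A^-\to A$ is bijective, which is Reedyness.

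The hard part is this last dimension identity, equivalently the injectivity of $\mu_j$ for $j\le l$: one must genuinely transport the Reedy decomposition of $eAe$ up to $A$ along $Ae\otimes_{eAe}eA\cong AeA$, and the obstruction is that the first tensor factor $A^+e_i$ has components in every degree and is not an $eAe$-submodule of $Ae$, so the factorization just used has to be analysed carefully, for instance by decomposing $Ae$ and $eA$ as right, respectively left, $eAe$-modules along $e$ and $1-e$. An alternative, closer to the remark following Proposition~\ref{prop:more}, is to pass through Theorem~\ref{thm:characterization}, reformulate the hypotheses of (ii) as statements about exact Borel and Delta subalgebras of $eAe$ and $A/AeA$, and invoke the recollement results of \cite{CK} together with the Dlab--Ringel construction \cite[Theorem~1]{DLcomp} to conclude that $A$ is quasi-hereditary with exact Borel subalgebra $A^-$ and Delta subalgebra $A^+$, whence $A$ is Reedy by Theorem~\ref{thm:characterization} again.
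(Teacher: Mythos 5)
Your overall skeleton matches the paper's: (iii)$\Rightarrow$(ii) is trivial, (i)$\Rightarrow$(iii) follows from Proposition~\ref{prop:more} plus the fact that $AeA=J_l$ sits in a heredity chain so that $Ae\otimes_{eAe}eA\to AeA$ is bijective by \cite[Proposition~7]{DLcomp}, and (ii)$\Rightarrow$(i) is reduced via Lemma~\ref{lem:direct_sum_characterizations} to showing that each $\mu_j\colon\bigoplus_{\deg(e_i)=j}A^+e_i\otimes_ke_iA^-\to J_j/J_{j-1}$ is bijective, with surjectivity extracted from $A=A^+\cdot A^-$ and the case $j>l$ transported from the Reedy quotient $A/AeA$ (the paper handles $j>l$ via Proposition~\ref{prop:more} and Lemma~\ref{lem:plus_minus}, which amounts to the same thing). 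All of this is sound.

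However, there is a genuine gap exactly where you flag ``the hard part'': the injectivity of $\mu_j$ for $j\leqslant l$ is never proved. Your proposed reduction to the dimension identity $\dim_k(Ae\otimes_{eAe}eA)=\dim_k(A^+e\otimes_{eSe}eA^-e)$ is itself only asserted (and as written the right-hand side is not even the correct candidate --- it should involve $eA^-$ rather than $eA^-e$, since $e_iA^-$ has components $e_iA^-e_h$ with $\deg(e_h)>l$), and the suggested fix ``decompose $Ae$ and $eA$ along $e$ and $1-e$'' is a direction, not an argument. The alternative route through Theorem~\ref{thm:characterization}, \cite{CK} and \cite[Theorem~1]{DLcomp} is likewise not carried out: the cited results of \cite{CK} pass exact Borel subalgebras \emph{down} to $eAe$ and $A/AeA$, whereas here one needs to go \emph{up}, and gluing Borel subalgebras of $eAe$ and $A/AeA$ to one of $A$ is precisely the nontrivial content being proved. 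The paper closes the gap differently: it first establishes, using $A=A^+\cdot A^-$ and degree bookkeeping, the isomorphisms \eqref{eq:trouble}, namely $As_{\kappa}/(As_{\kappa-1}As_{\kappa})\cong A^+s_{\kappa}$ and $s_{\kappa}A/(s_{\kappa}As_{\kappa-1}A)\cong s_{\kappa}A^-$ for $s_\kappa=\varepsilon_0+\cdots+\varepsilon_\kappa$; it then restricts the assumed bijection $Ae\otimes_{eAe}eA\to AeA$ to bijections $As_\kappa\otimes_{s_\kappa As_\kappa}s_\kappa A\to As_\kappa A$ for all $\kappa\leqslant l$, passes to successive quotients as in \eqref{eq:quotients}, and uses the Reedyness of $s_\kappa As_\kappa$ to identify $(s_\kappa As_\kappa)/(s_\kappa J_{\kappa-1}s_\kappa)$ with $S_\kappa$; combining this with \eqref{eq:trouble} yields $A^+s_\kappa\otimes_{S_\kappa}s_\kappa A^-\cong J_\kappa/J_{\kappa-1}$, which is exactly the missing injectivity. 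You have the right ingredients (you even state the degree estimates underlying \eqref{eq:trouble} when proving surjectivity), but without this filtration-and-quotient argument, or a worked-out substitute, the implication (ii)$\Rightarrow$(i) is not established.
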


\begin{proof}
The implication (iii)$\Rightarrow$(ii) is clear. We now prove (ii)$\Rightarrow$(i). By Lemma~\ref{lem:direct_sum_characterizations} we need to prove that for all $j$ there is an isomorphism 
\begin{equation}
\label{eq:wanted}
\bigoplus\limits_{\deg(e_i)=j} A^+e_i \otimes_k e_i A^- \cong J_j/J_{j-1}.
\end{equation}
We will use the notation $s_\kappa\coloneqq \varepsilon_0+\varepsilon_1+\dots+\varepsilon_\kappa$ for all $\kappa=0,1,\dots,m$. Recall also that $e=s_l=\varepsilon_0+\varepsilon_1+\dots+\varepsilon_l$ is fixed in the statement of (ii). We record a few observations.

Since $eAe$ is Reedy, it has a Reedy decomposition
$eAe\cong \bigoplus_{\deg(e_i)\leqslant l} \,eA^+e_i\, \otimes_{k}\, e_iA^-e$.
Multiplying this decomposition with $s_\kappa$ on both sides, for $s_\kappa$
with $\kappa\leqslant l$, implies that $s_\kappa As_\kappa$ is Reedy as well,
since $s_\kappa A^+ e_i = 0$ for $\deg(e_i) > \kappa$, and analogously for
$A^-$.

Moreover, for all $\kappa\leqslant l$, the (always surjective)
multiplication map 
\begin{equation}
    \label{eq:dagger}
As_\kappa\otimes_{s_\kappa As_\kappa}s_\kappa A\rightarrow As_\kappa A
\end{equation}
is an isomorphism as a restriction of the multiplication map
$Ae\otimes_{eAe}eA\rightarrow AeA$, which has been assumed to be bijective.

In addition, for all $k\leqslant m$, we claim that 
\begin{equation}
\label{eq:trouble}
    A s_{\kappa} /(A s_{\kappa-1} A s_{\kappa})\cong  A^+ s_{\kappa}\quad\text{and} \quad
s_{\kappa} A/ (s_{\kappa} As_{\kappa-1}A )\cong s_{\kappa} A^- 
\end{equation}as right and left $S_k$--modules respectively. Indeed, since $A=A^+\cdot A^-$ by assumption, any element of $A s_{\kappa} /(A s_{\kappa-1}A s_{\kappa})$ is a finite linear combination of equivalence classes of the form 
$c\cdot \eta\cdot b\cdot s_\kappa +A s_{\kappa-1}A s_{\kappa}$, where $c\in A^+$, $\eta\in E$ and $b\in A^-$. By the defining property (iii) of the subalgebra $A^-$ given in Setup \ref{setup}, the only non-zero components in such an expression are those with $\deg(\eta)=\kappa$, in which case $c\cdot \eta\cdot b\cdot s_\kappa=c\cdot \eta\cdot b\cdot\eta \in A^+s_{\kappa}$. This fact allows us to define a homomorphism of right $S_k$--modules from $A s_{\kappa} /(A s_{\kappa-1} A s_{\kappa})$ to $A^+ s_{\kappa}$, which is an isomorphism with inverse isomorphism the canonical map $A^+ s_{\kappa}\rightarrow A s_{\kappa} /(A s_{\kappa-1} A s_{\kappa})$. Similarly, there exists an isomorphism $s_{\kappa} A/ (s_{\kappa} As_{\kappa-1}A )\cong s_{\kappa} A^- $ of left $S_k$--modules. 

We now prove \eqref{eq:wanted} by induction on $\kappa$. By \eqref{eq:trouble}, $A\varepsilon_0=A^+\varepsilon_0$ and
similarly $\varepsilon_0 A=\varepsilon_0A^-$. 
In addition, $\varepsilon_0 A\varepsilon_0\cong S_0$ by the Reedyness of
$\varepsilon_0 A\varepsilon_0$. This concludes the base case $\kappa = 0$
of the induction, by reducing it to the isomorphism in \eqref{eq:dagger}.

For all $0<\kappa \leqslant l$, by \eqref{eq:dagger}, multiplication induces
the following isomorphism 
\begin{equation}
\label{eq:quotients}
\left( A s_\kappa \underset{s_\kappa As_\kappa}{\otimes}s_\kappa A \right) / \left( As_{\kappa-1} \underset{s_{\kappa-1}As_{\kappa-1}}{\otimes} s_{\kappa-1}A \right) \xrightarrow{\cong} As_\kappa A/As_{\kappa-1}A.
\end{equation}
The left hand side of \eqref{eq:quotients} maps onto 
\begin{equation*}
\left(As_\kappa/As_{\kappa-1}As_k\right) \underset{(s_\kappa As_\kappa)/(s_{\kappa}J_{\kappa-1}s_{\kappa})}{\otimes}\left( s_\kappa A/s_k As_{\kappa-1}A\right).
\end{equation*}
The latter vector space maps to $As_\kappa A/As_{\kappa-1}A$ by the rule $\bar{a}\otimes\bar{b}\mapsto \overline{a\otimes b}$, altogether producing a factorisation of the map in \eqref{eq:quotients}. 
Thus the surjection is an isomorphism:
$$
\, \, \left( A s_\kappa \underset{s_\kappa As_\kappa}{\otimes}s_\kappa A \right) / \left( As_{\kappa-1} \underset{s_{\kappa-1}As_{\kappa-1}}{\otimes} s_{\kappa-1}A \right) \cong \left(As_\kappa/As_{\kappa-1}As_k\right) \underset{(s_\kappa As_\kappa)/(s_{\kappa}J_{\kappa-1}s_{\kappa})}{\otimes}\left( s_\kappa A/s_k As_{\kappa-1}A\right).$$
This allows to rewrite  (\ref{eq:quotients}) as 
\begin{equation}\label{eq:quotientsrewritten}
\left(As_\kappa/As_{\kappa-1}As_k\right) \underset{(s_\kappa As_\kappa)/(s_{\kappa}J_{\kappa-1}s_{\kappa})}{\otimes}\left( s_\kappa A/s_k As_{\kappa-1}A\right)\xrightarrow{\cong} As_\kappa A/As_{\kappa-1}A.
\end{equation}

As observed above, the algebra $s_\kappa As_\kappa$ is Reedy, and we are now
going to use this to rewrite the left hand side of
(\ref{eq:quotientsrewritten}). By Reedyness, 
$(s_\kappa As_\kappa)/ (s_{\kappa}J_{\kappa-1}s_{\kappa}) =S_{\kappa}.$ 
Therefore, there is an isomorphism 
\begin{equation*}
\left(As_\kappa/As_{\kappa-1}As_k\right) \underset{S_k}{\otimes}\left( s_\kappa A/s_k As_{\kappa-1}A\right) \xrightarrow{\cong} As_\kappa A/As_{\kappa-1}A.
\end{equation*}
From \eqref{eq:trouble}, there is an isomorphism
\begin{equation*}
  A^+s_\kappa \otimes_{S_\kappa} s_\kappa A^- 
  \xrightarrow{\cong} As_\kappa A/As_{\kappa-1}A.
\end{equation*}
At this point, the 
desired isomorphism \eqref{eq:wanted} has been proved for all
$\kappa=0,1,\dots,l$. 

Moreover, since the algebra $A/J_l$ is Reedy by assumption and
$J_{l+1}/J_l\subseteq \cdots \subseteq J_{m}/J_{l}=A/J_l$ is a heredity chain
of $A/J_l$, Proposition~\ref{prop:more} implies that for all
$\kappa=l+1,\dots,m$ the ideal $J_{\kappa}/J_{\kappa-1}$ is the heredity ideal
generated by all idempotents of lowest degree in the Reedy algebra
$A/J_{\kappa-1}$. Thus to finish the proof, Lemma \ref{lem:plus_minus} can be
employed.

Finally, we prove (i)$\Rightarrow$(iii). The fact that the various algebras $eAe$ and $A/AeA$ are Reedy
is part of Proposition~\ref{prop:more}. It remains to prove that the
multiplication map $Ae\otimes_{eAe}eA\rightarrow AeA$ is an isomorphism of
$k$--vector spaces. 
Using that Reedy algebras are quasi-hereditary, this
follows from \cite[Proposition~7]{DLcomp}.
\end{proof}

\begin{remark}
  The assumption $A=A^+\cdot A^-$ in Theorem~\ref{thm:main} cannot be omitted.
  Indeed, consider the algebra $A\coloneqq \begin{pmatrix}
k & k \\
0 & k
\end{pmatrix}$ and let $e_0=\begin{pmatrix}
1 & 0 \\
0 & 0 
\end{pmatrix}$ and $e_1=\begin{pmatrix}
0 & 0 \\
0 & 1 
\end{pmatrix}$ be idempotents with $\deg(e_0)=0$ and $\deg(e_1)=1$.

Put $S=A^+=A^-=\begin{pmatrix}
k &  0\\
0 & k
\end{pmatrix}$, the $k$-algebra generated by the two idempotents
$e_0$ and $e_1$. Then $A^+\otimes_S A^-=S\subsetneq A$ so that $(A,A^+,A^-)$
is not Reedy. However, the multiplication
map ${Ae_0 \otimes_{k}e_0A\rightarrow Ae_0A}$ is an isomorphism, identifying
the tensor product of the first column, as left module, and the first row,
as right module, with the first row as bimodule. Moreover, the
al\-ge\-bras $e_0Ae_0\cong k$ and $A/Ae_0A\cong k$ have the required Reedy
decompositions. Thus, condition (ii) in Theorem~\ref{thm:main} is
satisfied while condition (i) fails.
\end{remark}

\section*{Acknowledgements} We would like to thank Anna Rodriguez Rasmussen for sharing with us in advance her results on exact Borel subalgebras of quasi-hereditary monomial algebras (\cite{Rasmussen}), as well as for helpful discussions on this topic. We would also like to thank the anonymous referee for a careful reading of the manuscript and for several suggestions that improved the exposition of the paper.

\begin{bibdiv}
\begin{biblist}[\normalsize]

\bib{Bousfield-Kan}{book}{
 author = {Bousfield, A. K.},
 author={Kan, D. M.},
 title = {Homotopy limits, completions and localizations},
 series = {Lect. Notes Math.},
 volume = {304},
 year = {1972},
 publisher = {Springer, Cham},
}

\bib{CPS}{article}{
    AUTHOR = {Cline, E.},
     AUTHOR = {Parshall, B.},
      AUTHOR = {Scott, L.},
     TITLE = {Finite-dimensional algebras and highest weight categories},
   JOURNAL = {J. Reine Angew. Math.},
    VOLUME = {391},
      YEAR = {1988},
     PAGES = {85--99},
}

\bib{Conde}{article}{
    AUTHOR = {Conde, T.},
     TITLE = {All quasihereditary algebras with a regular exact {B}orel
              subalgebra},
   JOURNAL = {Adv. Math.},
    VOLUME = {384},
      YEAR = {2021},
      number={107751},
     PAGES = {45 pp},
}

\bib{CK}{article}{
    AUTHOR = {Conde, T.},
    AUTHOR = {K\"ulshammer, J.},
     TITLE = {Exact Borel subalgebras, idempotent quotients and idempotent subalgebras},
   Journal = {Math. Z.},
   VOLUME = {313},
      YEAR = {2026},
    NUMBER = {1},
    pages= {32 pp},
}

\bib{DS}{article}{
Author = {Dalezios, G.},
Author = {\v{S}{\fontencoding{T1}\selectfont \v{t}}ov\'i\v{c}ek, J.},
Title = {Linear Reedy categories, quasi-hereditary algebras and model structures},
journal={Adv. Math.},
volume={481},
year={2025},
number={110550},
pages={57 pp}
}

\bib{DengXidual}{article}{
  AUTHOR = {Deng, B. M.},
  author = {Xi, C. C.},
     TITLE = {Quasi-hereditary algebras which are dual extensions of
              algebras},
   JOURNAL = {Comm. Algebra},
    VOLUME = {22},
      YEAR = {1994},
    NUMBER = {12},
     PAGES = {4717--4735},
}

\bib{DengXi}{article}{
    AUTHOR = {Deng, B. M.},
    AUTHOR= {Xi, C. C.},
     TITLE = {Quasi-hereditary algebras which are twisted double incidence
              algebras of posets},
   JOURNAL = {Beitr. Algebra Geom.},
    VOLUME = {36},
      YEAR = {1995},
    NUMBER = {1},
     PAGES = {37--71},
      ISSN = {0138-4821},
}

\bib{DLcomp}{article}{
 Author = {Dlab, V.},
 Author = {Ringel, C. M.}, 
 Title = {A construction for quasi-hereditary algebras},
 Journal = {Compos. Math.},
 ISSN = {0010-437X},
 Volume = {70},
 Number = {2},
 Pages = {155--175},
 Year = {1989},
}

\bib{DR}{article}{
 Author = {Dlab, V.},
 Author = {Ringel, C. M.}, 
 Title = {Quasi-hereditary algebras},
 Journal = {Ill. J. Math.},
 ISSN = {0019-2082},
 Volume = {33},
 Number = {2},
 Pages = {280--291},
 Year = {1989},
}

\bib{DRapp}{incollection}{
 Author = {Dlab, V.},
 Author = {Ringel, C. M.}, 
 Title = {The module theoretical approach to quasi-hereditary algebras},
 BookTitle = {Representations of algebras and related topics. Proceedings of
   the Tsukuba international conference, held in Kyoto, Japan, 1990. Edited
   by H. Tachikawa and S. Brenner},
 ISBN = {0-521-42411-9},
 Pages = {200--224},
 Year = {1992},
 Publisher = {Cambridge: Cambridge University Press},
}

\bib{Hir}{book}{
    AUTHOR = {Hirschhorn, P.~S.},
     TITLE = {Model categories and their localizations},
    SERIES = {Mathematical Surveys and Monographs},
    VOLUME = {99},
 PUBLISHER = {American Mathematical Society, Providence, RI},
      YEAR = {2003},
     PAGES = {xvi+457},
      ISBN = {0-8218-3279-4},
}

\bib{Koe95}{article}{
 Author = {Koenig, S.},
 Title = {Cartan decompositions and {BGG}-resolutions},
 Journal = {Manuscr. Math.},
 ISSN = {0025-2611},
 Volume = {86},
 Number = {1},
 Pages = {103--111},
 Year = {1995},
}

\bib{Koe1}{article}{
    AUTHOR = {Koenig, S.},
     TITLE = {Exact {B}orel subalgebras of quasi-hereditary algebras {I}},
      NOTE = {With an appendix by Leonard Scott},
   JOURNAL = {Math. Z.},
    VOLUME = {220},
      YEAR = {1995},
    NUMBER = {3},
     PAGES = {399--426},
}

\bib{Koe2}{article}{
    AUTHOR = {Koenig, S.},
     TITLE = {Exact {B}orel subalgebras of quasi-hereditary algebras {II}},
   JOURNAL = {Comm. Algebra},
    VOLUME = {23},
      YEAR = {1995},
    NUMBER = {6},
     PAGES = {2331--2344},
}

\bib{Koe96}{article}{
 Author = {Koenig, S.},
 Title = {On the global dimension of quasi-hereditary algebras with triangular decomposition},
 Journal = {Proc. Am. Math. Soc.},
 ISSN = {0002-9939},
 Volume = {124},
 Number = {7},
 Pages = {1993--1999},
 Year = {1996},
}

\bib{KKO}{article}{
  AUTHOR = {Koenig, S.},
  author={K\"ulshammer, J.},
  author={Ovsienko, S.},
     TITLE = {Quasi-hereditary algebras, exact {B}orel subalgebras,
              {$A_\infty$}-categories and boxes},
   JOURNAL = {Adv. Math.},
    VOLUME = {262},
      YEAR = {2014},
      PAGES = {546--592},
      }

\bib{KXtwisted}{article}{
AUTHOR = {Koenig, S.}, author={Xi, C. C.},
     TITLE = {Strong symmetry defined by twisting modules, applied to
              quasi-hereditary algebras with triangular decomposition and
              vanishing radical cube},
   JOURNAL = {Comm. Math. Phys.},
    VOLUME = {197},
      YEAR = {1998},
    NUMBER = {2},
     PAGES = {427--441},
}        
      
\bib{KueMie}{article}{
  author={K\"ulshammer, J.},
  author={Miemietz,V.},
  title={Uniqueness of exact Borel subalgebras and bocses},
  journal={Mem. Am. Math. Soc.},
  volume={314},
  year={2025},
  number={1594},
  pages ={vi+219 pp.},
  }

\bib{RODRIGUEZRASMUSSEN}{article}{
author = {Rodriguez Rasmussen, A.},
title = {Uniqueness up to inner automorphism of regular exact Borel subalgebras},
journal = {Adv. Math.},
volume = {461},
number = {110049},
year = {2025},
pages={80 pp},
}
     
\bib{Rasmussen}{misc}{
Author = {Rodriguez Rasmussen, A.},
Title = {Exact Borel subalgebras of quasi-hereditary monomial algebras},
Note = {arXiv:2504.01706}
}

\bib{japan}{article}{
    AUTHOR = {Uematsu, M.},
    AUTHOR = {Yamagata, K.},
     TITLE = {On serial quasi-hereditary rings},
   JOURNAL = {Hokkaido Math. J.},
    VOLUME = {19},
      YEAR = {1990},
    NUMBER = {1},
     PAGES = {165--174}
}

\bib{Xidual}{article}{
AUTHOR = {Xi, C. C.},
     TITLE = {Quasi-hereditary algebras with a duality},
   JOURNAL = {J. Reine Angew. Math.},
    VOLUME = {449},
      YEAR = {1994},
     PAGES = {201--215},
}

\end{biblist}
\end{bibdiv}

\end{document}